\newcommand{\pd}[2]{\frac{\partial #1}{\partial #2}}
\newcommand{\di}{\textnormal{div}\, }
\newcommand{\dix}{\textnormal{div}_x\, }
\newcommand{\diR}{\textnormal{div}_{R}\, }
\newcommand{\nablax}{\nabla_x\,}
\newcommand{\nablaR}{\nabla_{R}\,}
\newcommand{\DeltaR}{\Delta_{R}\,}
\newcommand{\Rd}{\mathbb{R}^d}
\newcommand{\tM}{\tiny \textsc{m}}
\newcommand{\tP}{\tiny \textsc{p}}
\newcommand{\xko}{\mathbf{x}}
\newcommand{\nko}{\mathbf{n}}
\newcommand{\ucko}{\mathbf{u}}
\newcommand{\vcko}{\mathbf{v}}
\newcommand{\Ccko}{\mathbf{C}}
\newcommand{\Dcko}{\mathbf{D}}
\newcommand{\Tcko}{\mathbf{T}}
\newcommand{\Rko}{\mathbf{R}}
\newcommand{\Icko}{\mathbf{I}}
\newcommand{\Fko}{\mathbf{F}}
\newcommand{\trC}{\textnormal{tr}\, \mathbf{C}}
\newcommand{\intO}[1]{\int_{\Omega} {#1}\ dx\, }
\newcommand{\intoto}[2]{\int_0^{#1} \int_{\Omega} {#2}\ dx \, dt\, }
\newcommand{\intR}[1]{\int_{\mathbb{R}^d} {#1} \ d\Rko \, }
\newcommand{\intOR}[1]{\int_{\Omega \times \Rd} {#1} \    d\Rko\,dx\,}
\newcommand{\intOTOR}[1]{\int_0^T \int_{\Omega\times\Rd}{#1} \ d\Rko\,dx\,dt \, }
\newcommand{\LpM}[1]{L_M^{#1}(\Omega \times \Rd)}
\newcommand{\HkM}[1]{H_M^{#1}(\Omega \times \Rd)}
\newcommand{\LLko}[2]{L^{#1}(0,T;{#2})}
\newcommand{\Lp}[1]{L^{#1}((0,T)\times\Omega)^{d \times d}}
\newcommand{\n}[2]{\left\lVert{#2}\right\rVert_{#1}}
\newcommand{\nLpM}[2]{\|{#2}\|_{L_M^{#1}(\Omega \times \Rd)}}
\newcommand{\nHkM}[2]{\|{#2}\|_{H_M^{#1}(\Omega \times \Rd)}}
\newcommand{\varphih}{\hat{\varphi}}
\newcommand{\us}{\ucko_*}
\newcommand{\psis}{\psi_*}
\newcommand{\psih}{\hat{\psi}}
\newcommand{\psish}{\hat{\psi}_*}
\newcommand{\psishL}{\hat{\psi}_{*,L}}
\newcommand{\trCs}{\trC_*}
\newcommand{\Cs}{\Ccko_*}
\newcommand{\betaL}{\beta^L}
\newcommand{\go}{\gamma_1}
\newcommand{\gt}{\gamma_2}
\renewcommand{\(}{\left(}
\renewcommand{\)}{\right)}
\DeclareMathOperator*{\esssup}{ess\,sup}
\newcommand{\defeq}{\vcentcolon=}
\newcommand{\eqdef}{=\vcentcolon}
\newtheorem{theo}{Theorem}
\newtheorem{defin}[theo]{Definition}
\newtheorem{lem}[theo]{Lemma}
\newtheorem{coro}[theo]{Corollary}
\theoremstyle{remark}
\title{Existence of global weak solutions to the kinetic Peterlin model}
\author{P.~Gwiazda, M.~Luk\' a\v cov\' a-Medvi\v dov\' a, H.~Mizerov\' a, A.~\' Swierczewska-Gwiazda}
\begin{document}

\maketitle

\begin{abstract}
We consider a class of kinetic  models for polymeric fluids motivated by the Peterlin dumbbell theories for dilute polymer solutions with a nonlinear spring law for an infinitely extensible spring. The polymer molecules are suspended in an incompressible viscous Newtonian fluid confined to a bounded domain in two or three space dimensions. The unsteady motion of the solvent is described by the incompressible Navier-Stokes equations with the elastic extra stress tensor appearing as a forcing term in the momentum equation. The elastic stress tensor is defined by the Kramers expression through the probability density function that satisfies the corresponding Fokker-Planck equation. In this case, a coefficient depending on the average length of  polymer molecules appears in the latter equation. Following the recent work of Barrett and S\" uli \cite{BaSu6} we prove the existence of global-in-time weak solutions to the kinetic Peterlin model in two space dimensions. 
\end{abstract}


\section{Introduction}

The Peterlin approximation is a nonlinear model falling into the category of Navier-Stokes-Fokker-Planck type systems. The nonlinearity of the model corresponds to the nonlinear spring potential for infinitely extensible molecular chains appearing in the Fokker-Planck equation. Among the nonlinear dumbbell models the most commonly studied one is the FENE model - finitely extensible nonlinear elastic model. Its advantage consists in a particular form of the spring potential, which forces that the system is considered in a bounded domain. Thus, even though in case of such a nonlinearity the macroscopic closure is not possible, but the methods developed in a series of papers, cf.~\cite{BaSu, BaSu4, BaSu5} allowed for showing existence of global-in-time weak solutions. The case of spring potential in the Peterlin model does not provide finite extensibility of polymeric chains, thus the problem of unbounded domain (and integration by parts) has to be faced. However, the idea of averaging t
 he coefficients (with respect to $\Rko$ - the vector corresponding to the length and orientation of polymers) gave that they depend on the macroscopic quantity only, namely the trace of the conformation tensor $\trC=\langle|\Rko|^2\rangle$, which is the average length of polymer molecules suspended in the solvent. This property apparently allows to prove a rigorous macroscopic closure of a kinetic equation and to use the results on existence and regularity of macroscopic quantities. This idea has very recently been used for a linear Hookean dumbbell model 
 \cite{BaSu6} by Barrett and S\" uli, who   showed the existence of large-data global-in-time 
weak solutions. The Hookean model arises as a microscopic-macroscopic bead-spring model from the kinetic theory of dilute solutions of polymeric liquids with noninteracting polymer chains. The authors have also rigorously showed that the well-known Oldroyd-B model is a macroscopic closure of the Hookean dumbbell model in two space dimensions.  It is worth to mention here that an attempt of mimicking the approach used for FENE models to linear Hookean case failed. Barrett and S\"uli in~\cite{BaSu2} firstly covered just the case of Hookean-type models, meaning by that a slight modification of the spring potential to provide the uniform integrability of appropriate terms.  

Motivated by their approach we study the kinetic Peterlin model representing a class of kinetic dumbbell-based models for dilute polymer solutions with a nonlinear spring force law.
For the macroscopic closure of the corresponding kinetic equation, it is necessary to approximate the spring force. We consider the Peterlin approximation  \cite{P} that allows us to derive the so-called Peterlin viscoelastic model, which has been studied in our recent work \cite{LuMiNeRe,mizerova} and these results will be essentially  used in a current approach. See also \cite{LuMiNoTaI,LuMiNoTaII} for our recent result on error analysis using the Lagrange-Galerkin method.
As a consequence of the approximation of the force law, the Fokker-Planck equation contains  additional coefficients, which depend on the trace of the (macroscopic) conformation tensor. 
As mentioned in \cite{renardy_wang}, the Peterlin model can be therefore seen as the generalization of the upper-convected Maxwell model, in which the relaxation time and viscosity depend on a ``structure parameter''  $\trC.$

In Section~\ref{sec:model} of the present paper we introduce the kinetic Peterlin model and its formal macroscopic closure, the  so-called Peterlin viscoelastic model. In the next section we recall our recent results on uniqueness of regular weak solutions to the proposed macroscopic model. In Section~\ref{sec:FP}, recalling the idea of Barrett and S\" uli \cite{BaSu6}, we show the existence of global-in-time weak solutions to the Fokker-Planck equation for some given fluid velocity $\us$ and conformation tensor $\Cs$. Let us mention that the difference between the kinetic Peterlin model analysed in the present paper and the Hookean model studied in \cite{BaSu6} is the dependence on the structure parameter $\trC$ appearing in the Fokker-Planck equation due to the Peterlin approximation of the nonlinear spring force law. Finally, in Section~\ref{sec:ex_kinetic} we show the existence of global-in-time weak solutions to the kinetic Peterlin model in two space dimensions. We combine the 
 result on uniqueness of  solutions to the macroscopic model with the results presented in Section~\ref{sec:FP}.


\section{The kinetic Peterlin model}\label{sec:model}

In the present paper we study the existence of global weak solutions to a kinetic dumbbell-based model for dilute polymer solutions. The polymer molecules are suspended in an incompressible viscous Newtonian fluid confined to an open bounded domain $\Omega \subset \mathbb R^d,$ $d=2,3.$   The incompressible Navier-Stokes equations equipped with the no-slip boundary condition for the velocity are used to describe the unsteady motion of the solvent. 

Let $T > 0$ be given, find $\ucko:  [0,T]\times\bar{\Omega} \rightarrow
\mathbb R^d$ and $p: [0,T]\times \bar{\Omega} \rightarrow
\mathbb R$ such that 
\begin{subequations}\label{NS}
\begin{align}
\pd{\ucko}{t} + (\ucko\cdot\nabla_x)\ucko &= \nu\Delta_x\ucko + \dix\Tcko - \nabla_x p  & & \mbox{ in }  (0,T)\times \Omega, \label{ucko_eq} \\
\dix \ucko &= 0 & & \mbox{ in } (0,T)  \times  \Omega, \label{div_u}\\
\ucko&=\mathbf{0}   && \mbox{ on }  (0,T)\times\partial\Omega, \label{bc_ucko_NS}\\
\ucko(0)&=\ucko_{0}   && \mbox{ in }  \Omega. \label{ic_ucko_NS}
\end{align}
The elastic extra stress tensor $\Tcko: [0,T]\times\bar{\Omega} \rightarrow
\mathbb R^{d\times d},$ arising due to the random movement of polymers in the solvent, appears as the forcing term in equation \eqref{ucko_eq}, and depends  on the probability density function $\psi.$ It is defined  by the Kramers expression
 \begin{align}
\Tcko(\psi)&= n\gamma_3(\trC(\psi))\Ccko(\psi) -\Icko, \label{Tcko_eq}
\end{align}
\end{subequations}
where $n$ denotes the number density of polymer molecules,  i.e., the number of polymer molecules per unit volume. Let us note that the above equations \eqref{NS} are written in their non-dimensional form; $U_0,$ $L_0$ denote in what follows the characteristic flow speed and the characteristic length-scale of the flow, respectively; viscosity $\nu >0$ is the reciprocal of the Reynolds number.

The polymers are modelled as two beads connected by a spring and are assumed not to interact with each other. The spring connecting the beads exerts a spring force $\Fko(\Rko)$ with $\Rko$ being the vector connecting the beads.  We consider the spring force  to be nonlinear, i.e.  $\Fko(\Rko)=\go(|\Rko|^2)\Rko.$ On each of the beads there is a balance between the spring force, a friction force exerted by the surrounding fluid and a stochastic force due to Brownian motion. 
Let $\zeta >0$ be a friction coefficient, $k\tau$ be the magnitude of stochastic forces with $k$ being the Boltzmann constant and $\tau$ being the absolute temperature. Then the probability density  $\psi: [0,T] \times \Omega \times \mathbb R^d \rightarrow \mathbb R^+_0$  satisfies the following Fokker-Planck equation
\begin{align}\label{psi_eq}
\pd{\psi}{t} + (\ucko\cdot\nabla_x)\psi +\diR\left[\nabla_x\ucko \cdot \Rko \psi\right]&= \frac{2k\tau}{\zeta}\gt(\langle |\Rko|^2\rangle)\DeltaR \psi + \frac{2}{\zeta}\diR\left[ \Fko(\Rko)\psi \right]  +\frac{k\tau}{2\zeta}\Delta_x\psi  
\end{align}
with the center-of-mass diffusion coefficient $(k\tau)/(2\zeta)>0.$ The constitutive functions $\go,$ $\gt$ and $\gamma_{3}$ defined on $\mathbb R$ are from now on assumed to be continuous and positive-valued.
 If they are constant, then we obtain the Hookean dumbbell model whose closure is the well-known Oldroyd-B model. In order to derive an analogous closed system of equations for the conformation tensor we employ the Peterlin approximation of the spring force, which replaces the length of the spring $|\Rko|^2$ in the spring constant $\go$ by the average length of the spring $\langle |\Rko|^2\rangle.$ The force law thus reads $\Fko(\Rko)=\go(\langle |\Rko|^2 \rangle)\Rko=\go(\trC(\psi))\Rko.$ We note that for the trace of the macroscopic conformation tensor $\Ccko(\psi):=\langle \Rko \otimes \Rko\rangle$ it holds that  $\trC(\psi)=\langle |\Rko |^2 \rangle.$ Here $\otimes$ denotes the dyadic product and 
\begin{align*}
\langle f \rangle \defeq \intR{f(\Rko) \psi(t,\xko,\Rko)}  .
\end{align*}
For more details on deriving equation \eqref{psi_eq} we refer the reader to, e.g., \cite{Re1,BaSu6,degond_liu,shieber,P} and the references therein.

\begin{subequations}\label{FK_in_KP}
\begin{defin}(normalized Maxwellian) \label{def:Maxwellian} \\
We define the equilibrium distribution of the probability density function by
\begin{align}\label{Maxwellian}
M:=b\exp\left\{-\frac{|\Rko|^2}{2a}\right\} \quad \mbox{ with } \quad a:=\frac{k\tau\,\gamma_{2,\tM}}{\gamma_{1,\tM}},\ b:=(2\pi a)^{-d/2}.
\end{align}  
Here $\gamma_{i,\tM}:=\gamma_{i}(\trC_{\tM}) >0,$ $i=1,2,$ denote the values  of the functions $\go$ and $\gt$ at the equilibrium. We note that $\trC_{\tM}:=\trC(M)=d.$ 
\end{defin}

The next lemma provides the non-dimensional form of the Fokker-Planck equation \eqref{psi_eq} rewritten using the Maxwellian $M$ defined above.

\begin{lem} \ \label{lem:nondim_FP}\\
Let the functions $\go,$ $\gt$ be such that  the identity
\begin{align} \label{m2_Mcond}
\frac{\gamma_{1,\tM}}{\gamma_{2,\tM}}= \frac{\go(\trC)}{\gt(\trC)}=k\tau \eqdef \gamma_{\tM}
\end{align}
is satisfied for a.e. $(t,x) \in (0,T)\times \Omega.$
Then the Fokker-Planck equation \eqref{psi_eq} can be rewritten in its non-dimensional form as 
\begin{align}\label{FP_M}
\pd{\psi}{t} + (\ucko\cdot\nabla_x)\psi +\diR\left[\nabla_x\ucko \cdot \Rko \psi\right]&=  \Gamma(\trC)\,\nablaR \cdot \(M \, \nablaR\(\frac{\psi}{M}\)\)+\varepsilon\Delta_x\psi ,
\end{align}
where the coefficient $\Gamma(\trC)> 0$  and the center-of-mass diffusion coefficient $\varepsilon >0$ are given by 
\begin{align}\label{Gamma_vareps}
\Gamma(\trC):=\frac{\gt(\trC)}{2\lambda} , \quad \varepsilon:=\(\frac{l_0}{L_0}\)^2\frac{1}{8\lambda},
\end{align}
respectively. The coefficient $\lambda:=(\zeta/4\gamma_{\tM})(U_0/L_0),$ usually called the Deborah number, characterises the elastic relaxation property of the fluid, 
and  $l_0:=\sqrt{\trC_{\tM}/d}$ denotes the characteristic miscroscopic length-scale.
\end{lem}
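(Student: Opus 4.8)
The plan is to rewrite the right-hand side of the dimensional equation \eqref{psi_eq} so that the $R$-diffusion and the spring-force drift collapse into the single Maxwellian-weighted operator appearing in \eqref{FP_M}, and then to recast the surviving coefficients in non-dimensional form. The whole argument hinges on one elementary identity together with the coefficient-matching enforced by the hypothesis \eqref{m2_Mcond}.

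First I would record the identity that drives the computation. Since $M=b\exp\{-|\Rko|^2/(2a)\}$ satisfies $\nablaR M = -(\Rko/a)\,M$, the Leibniz (logarithmic-derivative) manipulation gives
\begin{align*}
M\,\nablaR\!\(\frac{\psi}{M}\) = \nablaR\psi - \psi\,\frac{\nablaR M}{M} = \nablaR\psi + \frac{1}{a}\,\Rko\,\psi,
\end{align*}
and hence, taking the $R$-divergence,
\begin{align*}
\nablaR\cdot\(M\,\nablaR\!\(\frac{\psi}{M}\)\) = \DeltaR\psi + \frac{1}{a}\,\diR(\Rko\,\psi).
\end{align*}
This is the only place where the precise Gaussian form of $M$ enters.

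Next I would insert the Peterlin approximation $\Fko(\Rko)=\go(\trC)\Rko$ into the force term of \eqref{psi_eq}, so that $\diR[\Fko(\Rko)\psi]=\go(\trC)\,\diR(\Rko\psi)$. The $R$-part of the right-hand side then reads $\frac{2k\tau}{\zeta}\gt(\trC)\DeltaR\psi+\frac{2}{\zeta}\go(\trC)\diR(\Rko\psi)$. Comparing this with $\Gamma(\trC)\,[\DeltaR\psi+a^{-1}\diR(\Rko\psi)]$ forces the two matching conditions $\Gamma(\trC)=\frac{2k\tau}{\zeta}\gt(\trC)$ and $\Gamma(\trC)\,a^{-1}=\frac{2}{\zeta}\go(\trC)$; dividing them yields $a=k\tau\,\gt(\trC)/\go(\trC)$. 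By \eqref{m2_Mcond} the ratio $\go(\trC)/\gt(\trC)$ is constant in $\trC$, so this $a$ is constant and coincides with the value $a=k\tau\,\gamma_{2,\tM}/\gamma_{1,\tM}$ fixed in Definition~\ref{def:Maxwellian}. This is exactly where the structural assumption \eqref{m2_Mcond} is indispensable: it guarantees that one and the same Gaussian $M$ simultaneously balances the diffusion and the drift for every value of $\trC$, which is what makes the collapse into a single operator possible.

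Finally I would carry out the non-dimensionalization, rescaling $x$ by the macroscopic length $L_0$, the configuration variable $\Rko$ by the microscopic length $l_0=\sqrt{\trC_{\tM}/d}$, and $t$ by $L_0/U_0$, and then collecting the surviving prefactors into the Deborah number $\lambda=(\zeta/4\gamma_{\tM})(U_0/L_0)$. The coefficient $\frac{2k\tau}{\zeta}\gt(\trC)$ reduces to $\Gamma(\trC)=\gt(\trC)/(2\lambda)$, while the center-of-mass coefficient $\frac{k\tau}{2\zeta}$ becomes $\varepsilon=(l_0/L_0)^2/(8\lambda)$, matching \eqref{Gamma_vareps}. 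I expect the main obstacle to be purely bookkeeping: tracking how the factors of $l_0$, $L_0$, $U_0$ and $\gamma_{\tM}$ redistribute among the three terms so that the advective left-hand side is left in the normalized form of \eqref{FP_M} while the two coefficients land precisely on the stated values. The conceptual content, by contrast, is entirely contained in the Maxwellian identity and in the coefficient-matching enforced by \eqref{m2_Mcond}.
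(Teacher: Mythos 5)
Your proposal is correct and follows essentially the same route as the paper's proof: both rest on the Maxwellian identity $\nablaR \cdot \left(M\,\nablaR\left(\frac{\psi}{M}\right)\right) = \DeltaR \psi + a^{-1}\diR\left[\Rko\,\psi\right]$ together with the coefficient matching enforced by \eqref{m2_Mcond}, which fixes $a=l_0=1$, followed by bookkeeping to land on $\Gamma(\trC)=\gt(\trC)/(2\lambda)$ and $\varepsilon=(l_0/L_0)^2/(8\lambda)$. The only difference is the order of the two steps --- you collapse the $\Rko$-terms in dimensional form and then non-dimensionalize, whereas the paper non-dimensionalizes first --- and this is immaterial.
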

\end{subequations}
  
\begin{proof}
Let us consider the non-dimensional variables denoted by $\sim,$ e.g., $\tilde{\psi}:=\psi/d_0,$ where $d_0$ is the characteristic probability density. We insert these variables  into \eqref{psi_eq} and multiply the resulting equation by $\frac{L_0}{U_0 d_0}.$ On noting $T_0=U_0/L_0,$ equation \eqref{psi_eq} becomes
\begin{equation}\label{psi_eq_nondim}
\begin{aligned}
\pd{\tilde{\psi}}{t} + (\tilde{\ucko}\cdot\nabla_{\tilde{x}})\tilde{\psi} +\di_{\tilde{R}}\left[\nabla_{\tilde{x}}\tilde{\ucko} \cdot \tilde{\Rko} \tilde{\psi}\right]&=\frac{2k\tau\,\gt(\trC)}{\zeta}\frac{L_0}{U_0(l_0)^2}\Delta_{\tilde{R}} \tilde{\psi} + \frac{2}{\zeta}\frac{L_0}{U_0}\di_{\tilde{R}}\left[\go(\trC)\tilde{\Rko}\tilde{\psi}\right]+& \\
& +\frac{k\tau}{2\zeta}\frac{1}{L_0 U_0} \Delta_{\tilde{x}}\tilde{\psi}.
\end{aligned}
\end{equation}
The direct calculation, taking into account \eqref{Maxwellian}, yields 
\begin{align*}
 \nabla_{\tilde{R}} \cdot \(\tilde{M} \, \nabla_{\tilde{R}}\(\frac{\tilde{\psi}}{\tilde{M}}\)\)=
\Delta_{\tilde{R}} \tilde{\psi} + \frac{1}{a}\di_{\tilde{R}}\left[\tilde{\Rko}\tilde{\psi}\right]
. 
\end{align*}
 We note that $a=l_0=1.$ 
Thus, it holds that 
\begin{align*}
\frac{2k\tau\,\gt(\trC)}{\zeta}\frac{L_0}{U_0(l_0)^2}\Delta_{\tilde{R}} \tilde{\psi} + \frac{2}{\zeta}\frac{L_0}{U_0}\di_{\tilde{R}}\left[\go(\trC)\tilde{\Rko}\tilde{\psi}\right]
= \frac{2k\tau\,\gt(\trC)}{\zeta}\frac{L_0}{U_0(l_0)^2}\nabla_{\tilde{R}} \cdot \(\tilde{M} \, \nabla_{\tilde{R}}\(\frac{\tilde{\psi}}{\tilde{M}}\)\). 
\end{align*}
By the definition of $\lambda$ and $\gamma_{\tM}$  it holds that  
\begin{align*}
\frac{2k\tau\,\gt(\trC)}{\zeta}\frac{L_0}{U_0(l_0)^2}=\frac{\gt(\trC)}{2\lambda}=
\Gamma(\trC), \quad  \varepsilon=\frac{k\tau}{2\zeta}\frac{1}{L_0 U_0}.
\end{align*}
Omitting the  $\sim$ -notation we get equation \eqref{FP_M}.
\end{proof}
Let us point out that the $\sim$ -notation of  the non-dimensional variables has been used only in the proof of Lemma~\ref{lem:nondim_FP}. In what follows  all the equations are non-dimensional.

\begin{subequations}\label{bc_ic_FP_M}
Finally, we impose the following decay/boundary and initial conditions on $\psi:$
\begin{align}
\left| M \bigg( \Gamma(\trC)\nablaR\left(\frac{\psi}{M}\right)-(\nablax\ucko)\Rko\frac{\psi}{M}\bigg)\right| \rightarrow 0 \quad \mbox{as} \quad |\Rko| \mapsto \infty && \quad \mbox{ on } (0,T] \times\Omega, \\
\varepsilon\pd{\psi}{\nko}=0  && \quad \mbox{ on }  (0,T) \times\partial\Omega \times \Rd, \\
\psi(0)=\psi_0 &&\quad\mbox{ on }  \Omega \times \Rd ,
\end{align}
where $\nko$ is the unit outward normal vector on $\partial\Omega$ and $\psi_0$ is a given non-negative function defined on $\Omega \times \Rd$ with $ \displaystyle \intR{\psi_0(\xko,\Rko)}=1$ for a.e. $x \in \Omega.$
\end{subequations}

\begin{defin}
Throughout the paper we refer to the system of equations and conditions \eqref{NS}, \eqref{FK_in_KP}, \eqref{bc_ic_FP_M} as the kinetic Peterlin  model \textnormal{(KP)}.
\end{defin}

In order to obtain a formal macroscopic closure of the above introduced kinetic model we multiply the non-dimensional form of the Fokker-Planck equation \eqref{psi_eq_nondim} by $\Rko\otimes \Rko$ and integrate by parts over $\Rd$ to get that the conformation tensor $\Ccko:[0,T]\times \Omega \rightarrow \mathbb R^{d \times d}$ satisfies the following euation
\begin{subequations}\label{macro_C}
\begin{align}
\pd{\Ccko}{t} + (\ucko\cdot\nabla)\Ccko - (\nabla\ucko)\Ccko -\Ccko(\nabla\ucko)^T &= \frac{\gt(\trC)}{\lambda} \Icko - \frac{\go(\trC)}{\lambda\gamma_{\tM}}\Ccko +\varepsilon\Delta\Ccko & & \mbox{ in } (0,T)  \times  \Omega \label{C_eq}
\end{align}
subject to the boundary and initial conditions
\begin{align}\label{bc_ic_macroC}
&\varepsilon\pd{\Ccko}{\nko}=0  \quad \mbox{ on }  (0,T) \times\partial\Omega, &
\Ccko(0)=\Ccko_0 \mbox{ in }  \Omega.
\end{align}
\end{subequations}

\begin{defin}
Throughout the paper we refer to the system of equations and conditions \eqref{NS},  \eqref{macro_C} as the (macroscopic) Peterlin model (MP).
\end{defin}

\subsection{Notation and preliminaries}

Let $\Omega \subset \mathbb R^d,$ $d=2,3,$ be a bounded domain with smooth boundary $\partial\Omega.$ We define the following functional spaces
\begin{align*}
 V &\defeq \{ \vcko \in H_0^1(\Omega)^d : \ \dix\vcko=0 \}, \quad  H \defeq\{ \vcko \in L^2(\Omega)^d : \ \dix\vcko=0 , \vcko\cdot{\bf n}=0{\rm\ on\ }{\partial\Omega} \}, &
\end{align*}
where the divergence is understood in the sense of distributions. We shall use the notation \begin{align*}
\psih\defeq \frac{\psi}{M}
\end{align*}
and the Maxwellian-weighted $L^p$ space over $\Omega \times  \mathbb R^d$ denoted by $\LpM{p},$ $p\in [1,\infty),$ equipped with the norm 
\begin{align*}
\nLpM{p}{\varphih}\defeq \left(\intOR{M|\varphih|^p}\right)^{1/p}.
\end{align*} 
 Analogously, we define the space $ \hat{X}\equiv \HkM{1} \defeq \{ \varphih \in L^1_{loc}(\Omega\times \Rd) : \ \nHkM{1}{\varphih}< \infty \} $ with the norm 
 \begin{align*}
\nHkM{1}{\varphih}\defeq   \left(\intOR{M\bigg[|\varphih|^2+|\nablax\varphih|^2+|\nablaR\varphih|^2\bigg]}\right)^{1/2}.
 \end{align*}
  Finally,  let 
\begin{align*}
\hat{Z}_2 \defeq  \left\{ \varphih \in \LpM{2} : \ \varphih \geq 0  \mbox{ a.e. on } \Omega \times \Rd; \intR{M(\Rko)\varphih(x,\Rko)} \leq 1 \mbox{ for a.e. } x \in \Omega\right\}. 
\end{align*} 

The proof of existence of weak solutions to the Fokker-Planck equation is based on the compactness theorem due to Dubinski\u{\i} \cite{Dub}, that is a generalization of the Lions-Aubin compactness theorem. We refer to \cite{Dub,BaSu6} and the references therein for more details.

\begin{theo}(Dubinski\u{\i})\label{theo:dubinski}\\
Suppose that $\mathcal A_0$ and  $\mathcal A_1$ are Banach spaces, $\mathcal A_0 \hookrightarrow \mathcal A_1,$ and $\mathcal M$ is a semi-normed subset of  $\mathcal A_0$ with the compact embedding $\mathcal M \hookrightarrow \mathcal A_0.$ Then, for $\alpha_i >1,$ $i=0,1,$ the embedding
\begin{align*}
\left\{ \eta \in \LLko{\alpha_0}{\mathcal M}\, : \, \pd{\eta}{t} \in \LLko{\alpha_1}{\mathcal A_1}  \right\}  \hookrightarrow  \LLko{\alpha_0}{\mathcal A_0}
\end{align*}
is compact.
\end{theo}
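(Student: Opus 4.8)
The statement is an Aubin--Lions--Simon compactness result, and I would prove it by the usual three-step scheme, the one genuinely new feature being that $\mathcal{M}$ is only a seminormed set (a cone), so that no vector-space operation, and in particular no subtraction, is permitted inside $\mathcal{M}$. It suffices to prove sequential compactness: given a sequence $\{\eta_n\}$ bounded in $\mathcal{W}:=\{\eta\in L^{\alpha_0}(0,T;\mathcal{M}):\partial_t\eta\in L^{\alpha_1}(0,T;\mathcal{A}_1)\}$, I must extract a subsequence converging in $L^{\alpha_0}(0,T;\mathcal{A}_0)$. The cornerstone is a generalized Ehrling inequality for \emph{differences}, controlled by the \emph{sum} of the individual seminorms: for every $\varepsilon>0$ there is $C_\varepsilon$ with
\[
\|u-v\|_{\mathcal{A}_0}\le \varepsilon\big([u]_{\mathcal{M}}+[v]_{\mathcal{M}}\big)+C_\varepsilon\,\|u-v\|_{\mathcal{A}_1},\qquad u,v\in\mathcal{M}.
\]
I would prove this by contradiction: a violating pair-sequence may be normalized to $\|u_n-v_n\|_{\mathcal{A}_0}=1$, which keeps $[u_n]_{\mathcal{M}},[v_n]_{\mathcal{M}}$ bounded and forces $\|u_n-v_n\|_{\mathcal{A}_1}\to0$; the compact embedding $\mathcal{M}\hookrightarrow\mathcal{A}_0$ then makes $u_n,v_n$ converge in $\mathcal{A}_0$, while continuity and injectivity of $\mathcal{A}_0\hookrightarrow\mathcal{A}_1$ identify the two $\mathcal{A}_0$-limits, giving $\|u_n-v_n\|_{\mathcal{A}_0}\to0$ and the desired contradiction. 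The essential point is that the seminorm is evaluated only on the admissible elements $u,v\in\mathcal{M}$ and never on their difference.

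The second step produces compactness in the weak space $\mathcal{A}_1$, where all operations are legitimate. From the bound on $\partial_t\eta_n$ and the identity $\eta_n(t+h)-\eta_n(t)=\int_t^{t+h}\partial_s\eta_n(s)\,ds$, H\"older's inequality gives the uniform time-translation estimate
\[
\|\eta_n(\cdot+h)-\eta_n(\cdot)\|_{L^{\alpha_1}(0,T-h;\mathcal{A}_1)}\le C\,h^{1-1/\alpha_1}\longrightarrow 0\quad(h\to0^+),
\]
uniformly in $n$. Simultaneously, the time-averages $\tfrac1s\int_t^{t+s}\eta_n\,d\tau$ lie in $\mathcal{M}$ with seminorm bounded by $\tfrac1s\int_t^{t+s}[\eta_n]_{\mathcal{M}}\,d\tau$ (convexity and positive homogeneity of the seminorm), hence are precompact in $\mathcal{A}_0$, and therefore in $\mathcal{A}_1$, by the compact embedding. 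These are exactly the two hypotheses of the Fr\'echet--Kolmogorov--Simon criterion in the Banach space $\mathcal{A}_1$, so $\{\eta_n\}$ is relatively compact in $L^{\alpha_0}(0,T;\mathcal{A}_1)$; I extract a subsequence $\{\eta_{n_k}\}$ that is Cauchy there.

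The final step upgrades $\mathcal{A}_1$-convergence to $\mathcal{A}_0$-convergence using the difference-Ehrling inequality pointwise in time. Applying it to $u=\eta_{n_k}(t)$, $v=\eta_{n_l}(t)$, raising to the power $\alpha_0$, and integrating over $(0,T)$ yields
\[
\|\eta_{n_k}-\eta_{n_l}\|_{L^{\alpha_0}(0,T;\mathcal{A}_0)}^{\alpha_0}\le C\,\varepsilon^{\alpha_0}\Big(\|[\eta_{n_k}]_{\mathcal{M}}\|_{L^{\alpha_0}(0,T)}^{\alpha_0}+\|[\eta_{n_l}]_{\mathcal{M}}\|_{L^{\alpha_0}(0,T)}^{\alpha_0}\Big)+C_\varepsilon\,\|\eta_{n_k}-\eta_{n_l}\|_{L^{\alpha_0}(0,T;\mathcal{A}_1)}^{\alpha_0}.
\]
The first bracket is uniformly bounded by the $\mathcal{W}$-bound, so it contributes at most $C'\varepsilon^{\alpha_0}$; the last term tends to $0$ as $k,l\to\infty$ since $\{\eta_{n_k}\}$ is Cauchy in $L^{\alpha_0}(0,T;\mathcal{A}_1)$. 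Hence $\limsup_{k,l}\|\eta_{n_k}-\eta_{n_l}\|_{L^{\alpha_0}(0,T;\mathcal{A}_0)}\le C''\varepsilon$, and letting $\varepsilon\to0$ shows $\{\eta_{n_k}\}$ is Cauchy, hence convergent, in $L^{\alpha_0}(0,T;\mathcal{A}_0)$, which is the claim. I expect the main obstacle to be precisely the seminormed-cone structure of $\mathcal{M}$: the classical Aubin--Lions argument would apply an Ehrling inequality to $\eta_{n_k}-\eta_{n_l}$, which is illegitimate here because the difference need not lie in $\mathcal{M}$. The whole scheme is therefore designed to route every compactness argument through the Banach space $\mathcal{A}_1$ and to invoke the seminorm of $\mathcal{M}$ only on single admissible functions and their time-averages, the difference-Ehrling inequality being the device that makes this possible.
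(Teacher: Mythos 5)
Your proposal cannot be compared against a proof in the paper itself: the paper states this theorem without proof, quoting it from Dubinski\u{\i} \cite{Dub} and Barrett--S\"uli \cite{BaSu6}, so the benchmark is the standard argument from those sources. Two of your three steps coincide exactly with that argument and are correctly executed: the difference-Ehrling inequality $\|u-v\|_{\mathcal{A}_0}\le\varepsilon\big([u]_{\mathcal{M}}+[v]_{\mathcal{M}}\big)+C_\varepsilon\|u-v\|_{\mathcal{A}_1}$ for $u,v\in\mathcal{M}$, proved by contradiction after normalizing $\|u_n-v_n\|_{\mathcal{A}_0}=1$ (legitimate, since $\mathcal{M}$ is a cone and the seminorm is positively homogeneous), and the final upgrade from Cauchy in $L^{\alpha_0}(0,T;\mathcal{A}_1)$ to Cauchy in $L^{\alpha_0}(0,T;\mathcal{A}_0)$ are precisely the devices that make the seminormed-set setting work.

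The genuine gap is in your middle step. You verify condition (i) of the Simon criterion by asserting that the time-averages $\tfrac1s\int_t^{t+s}\eta_n\,d\tau$ lie in $\mathcal{M}$ with seminorm bounded by the averaged seminorm, invoking ``convexity''. But a semi-normed set in Dubinski\u{\i}'s sense is only a cone: closed under multiplication by non-negative scalars, carrying a positively homogeneous functional defined on $\mathcal{M}$ alone; it need not be convex, and the functional is undefined off $\mathcal{M}$. An integral average is a limit of convex combinations, i.e.\ exactly the kind of vector-space operation you correctly banned from $\mathcal{M}$ at the outset, so this step silently strengthens the hypothesis: as written, your proof establishes the theorem only for convex (and suitably closed) $\mathcal{M}$, whereas the point of Dubinski\u{\i}'s generalization of Aubin--Lions is to handle nonconvex, nonlinear sets. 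The repair stays within your scheme: partition $[t_1,t_2]$ into intervals $I_j$ of length $\delta$; by Chebyshev applied to the uniform bound on $\int_0^T[\eta_n(t)]_{\mathcal{M}}^{\alpha_0}\,dt$, each $I_j$ contains a point $s_{n,j}$ with $[\eta_n(s_{n,j})]_{\mathcal{M}}\le R_\delta$ uniformly in $n$; then $\int_{t_1}^{t_2}\eta_n\,dt=\sum_j\delta\,\eta_n(s_{n,j})+E_n$, where $\|E_n\|_{\mathcal{A}_1}\le C(t_2-t_1)\,\delta^{1-1/\alpha_1}$ by your own H\"older estimate on the time derivative, and the sums range over a finite sum of relatively compact subsets of $\mathcal{A}_1$, which is relatively compact; letting $\delta\to0$ gives total boundedness. (Equivalently, one can follow the classical route: Arzel\`a--Ascoli in $C([0,T];\mathcal{A}_1)$, with pointwise relative compactness of $\{\eta_n(t)\}_n$ obtained by the same Chebyshev selection of nearby good points.) A minor, easily fixed slip in the same step: Simon's translation condition must hold in $L^{\alpha_0}(0,T-h;\mathcal{A}_1)$, not $L^{\alpha_1}$; this is harmless because your bound $\|\eta_n(t+h)-\eta_n(t)\|_{\mathcal{A}_1}\le C\,h^{1-1/\alpha_1}$ is in fact uniform in $t$, hence yields the estimate in every $L^q$ with $q<\infty$.
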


\section{Uniqueness result for the macroscopic Peterlin model}\label{sec:uniq_macro}

One of the crucial parts of the proof of existence of global weak solutions to the kinetic Peterlin model, presented in Section~\ref{sec:ex_kinetic} below, is uniqueness of regular  weak solutions to (MP). In this section we list the available results.

\begin{subequations}\label{weak_MP}
 The couple $(\ucko,\Ccko)$ with
\begin{align}
\ucko \in \LLko{\infty}{H} \cap \LLko{2}{V}, \quad 
 \Ccko \in \LLko{\infty}{L^2(\Omega)^{d\times d}} \cap \LLko{2}{H^1(\Omega)^{d\times d}}
\end{align}
is called a weak solution to the Peterlin model  (MP) if it satisfies, for any $(\vcko,\Dcko) \in V \times H^1(\Omega)^{d\times d},$
\begin{align}
\intO{\pd{\ucko}{t}\cdot\vcko} &+ \intO{\(\ucko\cdot\nabla\)\ucko\cdot\vcko} + \nu\intO{\nabla\ucko:\nabla\vcko} +\intO{\gamma_3(\trC)\Ccko:\nabla\vcko} =0& \label{weak_MP_ucko}\\
\intO{\pd{\Ccko}{t}:\Dcko} &+ \intO{\(\ucko\cdot\nabla\)\Ccko:\Dcko} -2\intO{(\nabla\ucko)\Ccko:\Dcko}+\varepsilon\intO{\nabla\Ccko:\nabla\Dcko}=&\nonumber \\
&=\frac{1}{\lambda}\intO{\gt(\trC)\Icko:\Dcko}-  \frac{1}{\lambda\gamma_{\tM}}\intO{\go(\trC)\Ccko:\Dcko}, &\label{weak_MP_Ccko}
\end{align}
 and if  $(\ucko(0),\Ccko(0))=(\ucko_0,\Ccko_0),$ for a given initial data $(\ucko_0,\Ccko_0)\in H \times L^2(\Omega)^{d\times d}.$
\end{subequations}

\vspace*{0.5cm}
\textbf{Assumptions on the constitutive functions}

Let us assume that $\go,$ $\gt$ and $\gamma_3$ are smooth positive functions defined on $\mathbb R$ and $\gamma_3$ is moreover   non-decreasing. Further, we suppose that for some positive constants $A_i,$ $B_i,$ $C_i,$  $i=1,2,$ the following polynomial growth conditions are satisfied for large $s$:
\begin{equation}\label{growth_cond}
\begin{aligned}
&A_1 s^{\alpha} \leq \gamma_1(s) \leq A_2 s^{\alpha},& &
&C_1 s^{\gamma} \leq \gamma_2(s) \leq C_2 s^{\gamma},& &
&B_1 s^{\beta} \leq \gamma_3(s) \leq B_2 s^{\beta}. 
\end{aligned}
\end{equation}
\qed

In \cite{LuMiNeRe} we have studied the existence and uniqueness of global weak and classical solutions to the Peterlin viscoelastic model  with $\lambda=\gamma_{\tM}=1.$ We have shown the existence of global-in-time weak solutions in both two and three space dimensions with only $\Ccko \in \Lp{p} \cap \LLko{1+\delta}{W^{1,1+\delta}}$ for $p>2$ and $0 < \delta << 1$. Moreover, for the two-dimensional case we have proven the existence and uniqueness of classical solutions to model (MP), which is of interest for our further needs; see Theorem~3 in \cite{LuMiNeRe}.

\begin{theo}(unique classical solution to (MP))\label{theo:uniq1}\\
Let the assumptions \eqref{growth_cond} on $\go,$ $\gt$ and $\gamma_3$ be satisfied with 
\begin{equation}\label{exponents_1}
 \alpha+\beta+1 > 2, \, \alpha>\beta+1,\, \beta \geq 0, \ \mbox{and} \ \gamma  < \alpha+1 \ \mbox{or} \ \gamma =\alpha+1 \ \mbox{with} \ dB_2C_2 < A_1B_1.
\end{equation}
In addition, let $|\psi'(s)|\le Ks^{\beta-1}$ for large $s$. Then there exists a global classical solution to the Peterlin model (MP) for $d=2$.
\end{theo}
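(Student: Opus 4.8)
\section*{Proof proposal}

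The plan is to obtain the classical solution in two main stages and then treat uniqueness separately: first I would construct a global weak solution of \eqref{weak_MP} by a Galerkin scheme together with energy-type a priori estimates, next I would bootstrap its regularity by exploiting the parabolic smoothing $\varepsilon\Delta\Ccko$ in \eqref{C_eq} and the favourable two-dimensional Sobolev embeddings, and finally I would prove uniqueness directly in the class of regular solutions by a Gronwall argument.

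First I would set up finite-dimensional Galerkin approximations $(\ucko_m,\Ccko_m)$, using divergence-free eigenfunctions of the Stokes operator for the velocity and eigenfunctions of the Neumann Laplacian for the conformation tensor, and extract the basic a priori bound from an energy identity. Testing \eqref{ucko_eq} with $\ucko$ and testing \eqref{C_eq} against the variational derivative of an elastic free energy $\mathcal F$ adapted to $\gamma_3$, the coupling contributions $\gamma_3(\trC)\,\Ccko:\nablax\ucko$ are arranged to cancel, leaving a differential inequality of the schematic form
\begin{align*}
\frac{d}{dt}\Big(\tfrac12\|\ucko\|_{L^2}^2+n\,\mathcal F[\Ccko]\Big)+\nu\|\nablax\ucko\|_{L^2}^2+\mathcal D[\Ccko]\le\frac{d}{\lambda}\intO{\gt(\trC)},
\end{align*}
where $\mathcal D[\Ccko]\ge0$ gathers the center-of-mass diffusion and the damping $\tfrac{1}{\lambda\gamma_{\tM}}\go(\trC)\,\trC$. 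Here the growth hypotheses \eqref{growth_cond}--\eqref{exponents_1} enter decisively: the dissipation $\mathcal D$ grows like $(\trC)^{\alpha+1}$ whereas the source on the right grows like $(\trC)^{\gamma}$, so $\gamma<\alpha+1$ (or the borderline $\gamma=\alpha+1$ with $dB_2C_2<A_1B_1$, which makes the leading constants compatible) lets the right-hand side be absorbed, while $\alpha>\beta+1$ together with $\alpha+\beta+1>2$ secure coercivity of $\mathcal F$ and keep the elastic stress production subordinate to the damping. This yields uniform bounds on $\ucko$ in $\LLko{\infty}{H}\cap\LLko{2}{V}$ and on $\Ccko$ in $\LLko{\infty}{L^2(\Omega)^{d\times d}}\cap\LLko{2}{H^1(\Omega)^{d\times d}}$, and passing to the limit $m\to\infty$ by the Aubin--Lions/Dubinski\u{\i} compactness theorem (Theorem~\ref{theo:dubinski}) produces a global weak solution in the sense of \eqref{weak_MP}.

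Next I would upgrade the regularity, where $d=2$ is essential. The Ladyzhenskaya inequality $\|f\|_{L^4}^2\le C\|f\|_{L^2}\|\nablax f\|_{L^2}$ and Gagliardo--Nirenberg interpolation make the energy class strong enough to test \eqref{C_eq} with $-\Delta\Ccko$; the smoothing from $\varepsilon\Delta\Ccko$ then yields $\Ccko\in\LLko{\infty}{H^1(\Omega)^{d\times d}}\cap\LLko{2}{H^2(\Omega)^{d\times d}}$, so that $\trC$ becomes continuous by Sobolev embedding. At this level the hypothesis $|\psi'(s)|\le Ks^{\beta-1}$ controls the derivative of the constitutive nonlinearity entering the elastic stress, so that the terms of the form $\psi'(\trC)\,\nablax\!\trC$ produced when differentiating $\dix\Tcko$ in \eqref{ucko_eq}, and the analogous reaction gradients in \eqref{C_eq}, are controlled; without it the elastic forcing could not be estimated at the $H^1$ level. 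Treating \eqref{C_eq} thereafter as a linear parabolic system with the now-regular drift $\ucko$ and H\"older-continuous reaction coefficients, Schauder/maximal-regularity estimates promote $\Ccko$ to $C^{2,1}$ in space-time; the two-dimensional Navier--Stokes system \eqref{ucko_eq}--\eqref{bc_ucko_NS} with this regular forcing then inherits its classical regularity, and a standard iteration closes the bootstrap.

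Finally, for uniqueness I would take two regular solutions, subtract the equations, and estimate the differences $\delta\ucko,\delta\Ccko$ in $L^2$. Testing the difference equations with $(\delta\ucko,\delta\Ccko)$, the convective and stretching terms are controlled by the two-dimensional interpolation inequalities and the boundedness of $\nablax\ucko$, while the differences of the constitutive terms are handled by the local Lipschitz continuity of $\go,\gt,\gamma_3$ (again using the derivative bound $|\psi'(s)|\le Ks^{\beta-1}$). This gives $\frac{d}{dt}\mathcal E\le C(t)\,\mathcal E$ with $C\in L^1(0,T)$, and Gronwall's lemma forces $\mathcal E\equiv0$. The main obstacle throughout is the very first a priori estimate: the nonlinear stretching $(\nablax\ucko)\Ccko$ and the polynomially growing reaction terms must be absorbed by the damping and diffusion, and it is exactly the exponent inequalities \eqref{exponents_1} that make this closure possible; once they are secured, the two-dimensional regularity bootstrap and the uniqueness argument are comparatively routine.
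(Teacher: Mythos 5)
The first thing to note is that the paper itself contains no proof of this theorem: it is recalled verbatim from the authors' earlier work \cite{LuMiNeRe} (Theorem~3 there), so there is no in-paper argument to compare against. Your outline does follow the same general strategy as that reference: an energy estimate built on a free energy $\Phi$ with $\Phi'\sim\gamma_3$ (so that the coupling term $n\gamma_3(\trC)\,\Ccko:\nablax\ucko$ cancels), absorption of the source into the damping under the exponent conditions \eqref{exponents_1}, a two-dimensional parabolic bootstrap in which $|\psi'(s)|\le Ks^{\beta-1}$ (you correctly read $\psi$ as the constitutive function $\gamma_3$ of the stress, not the probability density) controls the gradients produced by $\dix\Tcko$, and uniqueness by a Gronwall argument. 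As a plan, this is the right shape.

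There is, however, a concrete gap in your very first step, on which everything else is predicated. You claim the basic energy estimate yields $\Ccko\in\LLko{\infty}{L^2(\Omega)^{d\times d}}\cap\LLko{2}{H^1(\Omega)^{d\times d}}$. It does not: testing \eqref{C_eq} with (a multiple of) $\gamma_3(\trC)\Icko$ controls $\intO{\Phi(\trC)}$ and $\intO{\gamma_3'(\trC)|\nablax\trC|^2}$ — note this dissipation term has a sign only because $\gamma_3$ is assumed non-decreasing, a hypothesis your sketch never invokes — and these bounds control $\Ccko$ only through the positive semi-definiteness of $\Ccko$ (a property you also never establish, yet need in order to dominate $|\Ccko|$ by $\trC$), and only in $\beta$-dependent norms. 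Indeed the present paper records explicitly that the weak solutions of \cite{LuMiNeRe} lie \emph{only} in $\Lp{p}\cap\LLko{1+\delta}{W^{1,1+\delta}}$ for $p>2$ and $0<\delta\ll1$, which is strictly weaker than the energy class from which you launch your $-\Delta\Ccko$ bootstrap; bridging that gap is precisely where the extra conditions $\alpha>\beta+1$, $\alpha+\beta+1>2$ and the bound on $\gamma_3'$ do real work, rather than merely ``securing coercivity.'' A related inconsistency: your schematic inequality carries the source as $\frac{d}{\lambda}\intO{\gt(\trC)}$ \emph{without} the $\gamma_3$ weight; in that form the borderline condition would read $dC_2<A_1$, whereas the stated condition $dB_2C_2<A_1B_1$ contains $B_1,B_2$ exactly because the $\gamma_3$ weight must sit on both the source ($\sim s^{\gamma+\beta}$) and the damping ($\sim s^{\alpha+\beta+1}$) sides. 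None of this invalidates the overall strategy — these are the points the proof in \cite{LuMiNeRe} must and does address — but as written your bootstrap starts from a regularity class that your first step has not actually delivered.
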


In \cite{mizerova} we studied a particular case of (MP) in which $\gamma_3(s)=s,$  and the two functions $\go,$ $\gt$ were taken as in \eqref{growth_cond}. For the two-dimensional case we showed the existence and uniqueness of regular global-in-time weak solutions as defined in \eqref{weak_MP}. Another technique of the proof allowed us to cover different choices of the constitutive functions than in Theorem~\ref{theo:uniq1} above.  For completness, we recall Theorem~5.3 from \cite{mizerova}.

\begin{theo}(unique regular  weak solution to (MP))\label{theo:uniq2}\\
Let $\Omega \subset \mathbb R^2$ be of class $C^2$ and the initial data $(\ucko_0,\Ccko_0) \in [H^2(\Omega)^2 \cap V] \times H^2(\Omega)^{2\times 2}.$ Let the assumptions \eqref{growth_cond} and one of the following conditions be satisfied
\begin{equation}\label{exponents_2}
\begin{aligned}
 0 < \alpha \leq 2,\ 1 \leq \gamma < \alpha+1, \ \mbox{or } \ \gamma = \alpha+1  \ \mbox{with } \ dB_2C_2 < A_1B_1, \\
 \mbox{or } \ \alpha=0 \ \mbox{ and } \ \gamma=1.
 \end{aligned}
\end{equation}
Then the weak solution to the Peterlin model (MP) with $\gamma_3(s)=s$ satisfies 
\begin{align*}
\ucko \in \LLko{\infty}{H^2(\Omega)^2}, \quad \Ccko \in \LLko{\infty}{H^2(\Omega)^{2\times 2}}
\end{align*}
and it is unique.
\end{theo}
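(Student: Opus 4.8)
The plan is to upgrade the regularity of an already-constructed weak solution by a hierarchy of a priori estimates (carried out on a Galerkin approximation and passed to the limit) and then to obtain uniqueness from a Gronwall argument applied to the difference of two solutions. Throughout I would exploit the two-dimensional setting through Ladyzhenskaya's inequality $\|\vcko\|_{L^4}^2\le c\,\|\vcko\|_{L^2}\|\nablax\vcko\|_{L^2}$ and the embeddings $H^1(\Omega)\hookrightarrow L^p(\Omega)$ for every finite $p$ and $H^2(\Omega)\hookrightarrow L^\infty(\Omega)$. The starting point is the basic energy estimate: testing \eqref{weak_MP_ucko} with $\ucko$ and \eqref{weak_MP_Ccko} with $\Ccko$, and controlling the coupling terms such as $\int_\Omega(\nablax\ucko)\Ccko:\Ccko\le c\,\|\nablax\ucko\|_{L^2}\|\Ccko\|_{L^2}\|\nablax\Ccko\|_{L^2}$ by Ladyzhenskaya's inequality (absorbed into the $\varepsilon$-dissipation), yields $\ucko\in\LLko{\infty}{H}\cap\LLko{2}{V}$ and $\Ccko\in\LLko{\infty}{L^2(\Omega)^{2\times2}}\cap\LLko{2}{H^1(\Omega)^{2\times2}}$, i.e.\ the solution class of \eqref{weak_MP}.

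First I would control the structure parameter $\trC$. Using that $\Ccko$ remains symmetric and positive semi-definite, so that $\trC\ge0$, taking the trace of \eqref{C_eq} produces a scalar convection--diffusion equation for $\trC$ whose reaction part is $\tfrac{d}{\lambda}\gt(\trC)-\tfrac{1}{\lambda\gamma_{\tM}}\go(\trC)\,\trC$. By the growth conditions \eqref{growth_cond} the production behaves like $s^{\gamma}$ and the dissipation like $s^{\alpha+1}$ for large $s$, so the hypothesis $\gamma<\alpha+1$ (or the borderline $\gamma=\alpha+1$ with $dB_2C_2<A_1B_1$) makes the reaction strictly dissipative once $\trC$ is large. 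Testing this scalar equation against suitable powers of $\trC$ then yields an a priori bound on $\trC$, and hence on $|\Ccko|$, in $\LLko{\infty}{L^p(\Omega)}$ for every finite $p$; the restriction $\alpha\le2$ keeps the competing polynomial nonlinearities within the range controllable by the two-dimensional embeddings and the $\LLko{2}{H^1(\Omega)^{2\times2}}$ bound on $\Ccko$.

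Next I would bootstrap the spatial regularity. Reading \eqref{ucko_eq} as a Navier--Stokes system with forcing $\dix\Tcko=n\,\dix\big(\trC\,\Ccko\big)-\dix\Icko$, the standard higher-order energy method in two dimensions (testing with the Stokes operator and then with its square) absorbs the convective term by Ladyzhenskaya's inequality and controls the elastic forcing through the $L^p$ bounds on $\trC$ and the $\LLko{2}{H^1(\Omega)^{2\times2}}$ bound on $\Ccko$; together with $\ucko_0\in H^2(\Omega)^2\cap V$ this gives $\ucko\in\LLko{\infty}{H^2(\Omega)^2}$, and in particular $\nablax\ucko\in\LLko{\infty}{L^p(\Omega)^{2\times2}}$ for every finite $p$. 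With this velocity field fixed, \eqref{C_eq} is a linear parabolic system for $\Ccko$ with regular drift $\ucko$, stretching coefficient $\nablax\ucko$ and right-hand side built from $\gt(\trC)\Icko$ and $\go(\trC)\Ccko$; testing it with $\Delta^2\Ccko$ (equivalently, maximal parabolic regularity together with $\Ccko_0\in H^2(\Omega)^{2\times2}$) upgrades $\Ccko$ to $\LLko{\infty}{H^2(\Omega)^{2\times2}}$.

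The main obstacle is closing this bootstrap. Because $\gamma_3(s)=s$ the elastic stress is genuinely quadratic, $\Tcko\sim\trC\,\Ccko$, so the velocity estimate needs strong a priori control of $\trC$, while that control feeds back on the velocity gradient through the stretching term $\nablax\ucko:\Ccko$ in the trace equation; the two estimates must therefore be run simultaneously and closed by Gronwall, and the exponents in \eqref{exponents_2} are precisely what make the competing terms balance in two dimensions. Finally, for uniqueness I would subtract the equations satisfied by two solutions with the same data, test the velocity difference with itself and the conformation difference with itself, and estimate every nonlinear difference term using the $L^\infty$-in-time and $H^2$-in-space bounds just obtained, the local Lipschitz continuity of $\go$ and $\gt$ on the bounded range of $\trC$, and the resulting $L^\infty$ bounds on $\ucko$, $\nablax\ucko$ and $\Ccko$; Gronwall's inequality then forces the differences to vanish.
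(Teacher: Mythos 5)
A preliminary point: the paper itself gives no proof of Theorem~\ref{theo:uniq2}; the statement is recalled verbatim from Theorem~5.3 of \cite{mizerova}, and the surrounding text only indicates the method (energy estimates combined with regularity theory for the Stokes and Laplace operators, as in \cite{LuMiNeRe,mizerova}). Your overall architecture --- the scalar equation for $\trC$ with reaction made dissipative by \eqref{exponents_2}, bounds on $\trC$ in $\LLko{\infty}{L^p(\Omega)}$, a simultaneous $H^2$ bootstrap for $(\ucko,\Ccko)$ closed by Gronwall, and uniqueness via the difference equations --- does match the shape of that proof.

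There is, however, a genuine gap at your very first step, and it concerns exactly the structural feature that makes the Peterlin model tractable. With $\gamma_3(s)=s$ the elastic term of \eqref{weak_MP_ucko}, tested with $\vcko=\ucko$, is $\intO{\trC\,\Ccko:\nablax\ucko}$, which is \emph{quadratic} in $\Ccko$; your plan to bound the coupling terms by Ladyzhenskaya's inequality and ``absorb them into the $\varepsilon$-dissipation'' fails for this term. Indeed,
\begin{align*}
\left|\intO{\trC\,\Ccko:\nablax\ucko}\right| \le \frac{\nu}{2}\n{L^2}{\nablax\ucko}^2 + c\,\n{L^4}{\Ccko}^4 \le \frac{\nu}{2}\n{L^2}{\nablax\ucko}^2 + c\,\n{L^2}{\Ccko}^2\left(\n{L^2}{\Ccko}^2+\n{L^2}{\nablax\Ccko}^2\right),
\end{align*}
so the dissipation $\varepsilon\n{L^2}{\nablax\Ccko}^2$ ends up multiplied by the factor $c\,\n{L^2}{\Ccko}^2$, which is in no way small: it can neither be absorbed nor handled by Gronwall, because it multiplies the dissipation rather than the quantity under the time derivative. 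The term you actually display, $\intO{(\nablax\ucko)\Ccko:\Ccko}$, is the benign one. What the proofs in \cite{LuMiNeRe,mizerova} use instead is an \emph{exact cancellation}: test \eqref{weak_MP_Ccko} not with $\Ccko$ but with $\Dcko=\tfrac12\gamma_3(\trC)\Icko=\tfrac12\trC\,\Icko$ (equivalently, multiply the trace equation by $\tfrac12\trC$). Then the stretching term becomes $-\intO{\trC\,(\nablax\ucko):\Ccko}$ and cancels the elastic term of the momentum equation identically, the diffusion contributes $+\tfrac{\varepsilon}{2}\n{L^2}{\nablax\trC}^2$, and the remaining reaction terms, $\tfrac{d}{2\lambda}\intO{\gt(\trC)\trC}-\tfrac{1}{2\lambda\gamma_{\tM}}\intO{\go(\trC)\trC^{2}}$, are dissipative for large $\trC$ precisely under \eqref{exponents_2}. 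This cancellation --- together with the positive semi-definiteness of $\Ccko$, which you assert but never prove and which is needed for $|\Ccko|\le\trC$ --- is what closes the basic energy law, yields $\ucko\in\LLko{2}{V}$ and $\trC\in\LLko{\infty}{L^2(\Omega)}$, and thereby feeds the $\LLko{\infty}{L^p(\Omega)}$ bounds and the Stokes-operator bootstrap (where the quadratic stress reappears through $\dix(\trC\,\Ccko)$, not controllable in $L^2$ from $\Ccko\in\LLko{2}{H^1(\Omega)^{2\times2}}$ and $\trC\in\LLko{\infty}{L^p(\Omega)}$ alone, and must again be run inside the simultaneous Gronwall argument). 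Since you never invoke this trace-test structure anywhere, your opening estimate does not close as written, and the later steps, though of the right shape, rest on its outputs.
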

The proof of higher regularity is based on the regularity results for the Stokes and the Laplace operators. Analogously as in \cite{Te,LuMiNeRe,mizerova}, assuming enough regular data, conditions \eqref{growth_cond} and \eqref{exponents_2}, we can repeat the  argument several times to obtain arbitrarily regular solution to \eqref{weak_MP}. For our further needs it is sufficient to have $\ucko \in \LLko{\infty}{H^3(\Omega)^2}$ and $\Ccko \in \LLko{\infty}{H^2(\Omega)^{2\times 2}}.$
 
\begin{coro} \ \label{coro:uniq2} \\
Let $\Omega$ be of class $C^3$ and the initial data $(\ucko_0,\Ccko_0) \in [H^3(\Omega)^2 \cap V] \times H^3(\Omega)^{2\times 2}.$ Let the assumptions \eqref{growth_cond} and \eqref{exponents_2} be satisfied. Then the weak solution to the Peterlin model (MP) with $\gamma_3(s)=s$ satisfies 
\begin{align*}
\ucko \in \LLko{\infty}{H^3(\Omega)^2}, \quad \Ccko \in \LLko{\infty}{H^3(\Omega)^{2\times 2}}.
\end{align*}
\end{coro}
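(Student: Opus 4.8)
The plan is to bootstrap the $H^2$-regularity and uniqueness already furnished by Theorem~\ref{theo:uniq2} by one further order, exploiting the smoothing of the Stokes operator in \eqref{ucko_eq} and of the Laplacian with Neumann data in \eqref{C_eq}. The decisive tool is that in $d=2$ the space $H^2(\Omega)$ is a Banach algebra continuously embedded in $C(\bar{\Omega})\cap W^{1,p}(\Omega)$ for every $p<\infty$; consequently a product of two $H^2$-functions stays in $H^2$ and a product of an $H^2$- with an $H^1$-function stays in $H^1$, which is exactly what turns the quadratic couplings $\dix(\trC\,\Ccko)$ and $(\nabla\ucko)\Ccko$ into controllable source terms.

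First I would read \eqref{ucko_eq}, for a.e.\ fixed $t$, as a stationary Stokes problem $-\nu\Delta_x\ucko+\nabla_x p=\mathbf f(t)$, $\dix\ucko=0$, $\ucko|_{\partial\Omega}=\mathbf 0$, with $\mathbf f:=-\partial_t\ucko-(\ucko\cdot\nabla_x)\ucko+\dix\Tcko$. Since $\gamma_3(s)=s$ gives $\Tcko=n\,\trC\,\Ccko-\Icko$, one has $\|\dix\Tcko\|_{H^1}\le C\|\trC\,\Ccko\|_{H^2}\le C(1+\|\Ccko\|_{H^2}^2)$ and $\|(\ucko\cdot\nabla_x)\ucko\|_{H^1}\le C\|\ucko\|_{H^2}^2$, both bounded in $L^\infty(0,T)$ by Theorem~\ref{theo:uniq2}. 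Since $\partial\Omega\in C^3$, the Stokes estimate $\|\ucko\|_{H^3}\le C\|\mathbf f\|_{H^1}$ then delivers $\ucko\in\LLko{\infty}{H^3(\Omega)^2}$ as soon as $\partial_t\ucko$ is controlled in $L^\infty(0,T;H^1)$. Reading \eqref{C_eq} symmetrically as the Neumann problem $-\varepsilon\Delta\Ccko=\mathbf g(t)$ with $\mathbf g:=-\partial_t\Ccko-(\ucko\cdot\nabla)\Ccko+(\nabla\ucko)\Ccko+\Ccko(\nabla\ucko)^T+\tfrac1\lambda\gt(\trC)\Icko-\tfrac1{\lambda\gamma_{\tM}}\go(\trC)\Ccko$, elliptic regularity yields $\|\Ccko\|_{H^3}\le C(\|\mathbf g\|_{H^1}+\|\Ccko\|_{L^2})$. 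Using the just-gained $\ucko\in H^3$ (so $\nabla\ucko\in H^2$), the growth bounds \eqref{growth_cond} and the chain rule to place $\gt(\trC),\go(\trC)$ in $H^2$ as in \cite{mizerova}, every term of $\mathbf g$ except $\partial_t\Ccko$ lies in $L^\infty(0,T;H^1)$, so $\Ccko\in\LLko{\infty}{H^3(\Omega)^{2\times 2}}$ follows once $\partial_t\Ccko\in L^\infty(0,T;H^1)$.

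Everything therefore reduces to the bounds $\partial_t\ucko,\partial_t\Ccko\in L^\infty(0,T;H^1)$, which I regard as the genuine core. I would obtain these by differentiating the weak formulation \eqref{weak_MP} in time and testing the linear system satisfied by $(\partial_t\ucko,\partial_t\Ccko)$ with multipliers of type $-\Delta\partial_t\ucko$ and $-\Delta\partial_t\Ccko$, absorbing the convection and coupling terms by the two-dimensional interpolation and embedding inequalities together with the already established $L^\infty(0,T;H^2)$ bounds, and closing by Gronwall's lemma. The initial values $\partial_t\ucko(0),\partial_t\Ccko(0)$ are defined through the equations themselves and belong to $H^1$ precisely because $(\ucko_0,\Ccko_0)\in H^3(\Omega)^2\times H^3(\Omega)^{2\times 2}$ and $\partial\Omega\in C^3$; this compatibility is where the strengthened hypotheses of the corollary enter.

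The main obstacle I anticipate lies in closing these time-differentiated energy estimates: the contributions of $\partial_t[\dix(\trC\,\Ccko)]$ in the momentum equation and of $\partial_t[(\nabla\ucko)\Ccko]$ in the tensor equation couple $\partial_t\ucko$ and $\partial_t\Ccko$ at top order, and one must verify that the constitutive growth conditions \eqref{growth_cond} together with \eqref{exponents_2} still allow the cross terms to be absorbed into the dissipation $\nu\|\nabla\partial_t\ucko\|_{L^2}^2$ and $\varepsilon\|\nabla\partial_t\Ccko\|_{L^2}^2$, just as the borderline case $\gamma=\alpha+1$ with $dB_2C_2<A_1B_1$ was handled at the $H^2$ level in \cite{mizerova}. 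Once the time derivatives are bounded, the two elliptic estimates above upgrade the unique solution of Theorem~\ref{theo:uniq2} to $\ucko,\Ccko\in\LLko{\infty}{H^3}$, and the same scheme may be iterated to any order, as indicated in the preceding remark.
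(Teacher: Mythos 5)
Your proposal is correct and follows essentially the same route as the paper: the paper's own ``proof'' is just a remark that higher regularity follows from the regularity theory for the Stokes and Laplace (Neumann) operators by repeating the argument of \cite{Te,LuMiNeRe,mizerova} with more regular data, which is exactly the bootstrap you carry out explicitly (elliptic lifting via $\|\ucko\|_{H^3}\le C\|\mathbf f\|_{H^1}$ and $\|\Ccko\|_{H^3}\le C(\|\mathbf g\|_{H^1}+\|\Ccko\|_{L^2})$, reduced to time-differentiated energy estimates for $(\partial_t\ucko,\partial_t\Ccko)$ closed by Gronwall, with the $H^3$ data and $C^3$ boundary entering through the compatibility of the initial values). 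Your write-up in fact supplies more detail than the paper does.
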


Let us conclude the above two results for further reference.
Let the assumptions of Theorem~\ref{theo:uniq1} and Corollary~\ref{coro:uniq2} be satisfied. Then there exists a unique regular weak solution to the Peterlin model (MP), i.e., a couple $(\ucko,\Ccko)$ satisfying \eqref{weak_MP_ucko} - \eqref{weak_MP_Ccko} such that 
\begin{equation}
\begin{aligned}\label{reg_weak_MP}
\ucko &\in  \LLko{2}{V} \cap \LLko{\infty}{H^3(\Omega)^2},\\
 \Ccko &\in \LLko{2}{H^1(\Omega)^{2\times 2}}\cap \LLko{\infty}{H^2(\Omega)^{2\times 2}}.
\end{aligned}
\end{equation}


\section{The Fokker-Planck equation}\label{sec:FP}

In this section we want to prove the existence of the weak solution $\psi=\psis=M\psish$ to the Fokker-Planck equation \eqref{FP_M} for a given couple $(\us,\Cs).$ 

We set $(\ucko,\Ccko)=(\us,\Cs),$ where
\begin{align}\label{us_Cs}
 \us \in \LLko{2}{V} \cap \LLko{\infty}{H^3(\Omega)^2}, \quad \Cs \in \LLko{\infty}{H^2(\Omega)^{d \times d}}
\end{align}
and we seek the solution 
$\psish(t,\xko,\Rko)=\psi(t,\xko,\Rko)/M(\Rko)$ such that 
\begin{subequations}\label{FP_problem}
\begin{align}\label{FP_given_uC}
\pd{\psish}{t} + (\us\cdot\nabla_x)\psish +\diR\left[\nabla_x\us \cdot \Rko \psish\right]&= \Gamma(\trCs)\, \nablaR \cdot \(M \, \nablaR\(\frac{\psish}{M}\)\)+\varepsilon\Delta_x\psish 
\end{align}
subject to the following decay/boundary and initial conditions
\begin{align}
&\left| M \bigg( \Gamma(\trCs)\nablaR\psish-(\nablax\us)\Rko\psish\bigg)\right| \rightarrow 0 \quad \mbox{as} \quad |\Rko| \mapsto \infty && \quad \mbox{ on } (0,T] \times\Omega, \label{FP_decay}\\
&\varepsilon\pd{\psish}{\nko}=0  && \quad \mbox{ on }  (0,T) \times\partial\Omega \times \Rd, \\
&\psish(0)=\psih_0 &&\quad\mbox{ on }  \Omega \times \Rd .
\end{align}
Further we assume that 
\begin{align}\label{FP_ic}
&\psih_0 \in \LpM{2} \ \mbox{ with }  \ \psih_0 \geq 0 \mbox{ a.e. on } \Omega \times \Rd, \quad \intR{M(\Rko)\psih_0(x,\Rko)}=1 \mbox{ a.e. } x \in \Omega.
\end{align}
\end{subequations}
\begin{defin}\label{def:FP_given_uC}
We refer to the system of equations and conditions \eqref{us_Cs}, \eqref{FP_problem} as problem \textnormal{(FP)}.
\end{defin}

Let us point out that the difference between equation \eqref{FP_given_uC} and the Fokker-Planck equation of the Hookean dumbbell model studied in \cite{BaSu6} is the coefficient $\Gamma(\trCs).$ Under the assumptions \eqref{Gamma_vareps}, \eqref{growth_cond} and \eqref{us_Cs} it holds that $\n{\LLko{\infty}{L^{\infty}(\Omega)^{d\times d}}}{\Gamma(\trCs)} \leq c.$ Thus, the whole proof  of the existence result for the Fokker-Planck equation from Section~4 in \cite{BaSu6} can be repeated for problem (FP)  defined above. In what follows we recall the key steps of the proof to recall its main idea.

\subsection{Semi-discrete approximation of a regularized problem}

Firstly, we consider a regularization  ($\text{FP}_{\text{L}}$) of  problem (FP) that is based on the parameter $L>1.$ The term involving $\nablax\us$ in \eqref{FP_given_uC} and the corresponding term in \eqref{FP_decay} are modified using the cut-off function $\betaL \in C(\mathbb R)$ defined as 
\begin{align}
\betaL(s)=\min{(s,L)}=\left\{\begin{array}{ll}
s, & s \leq L \\
L, & s \geq L. \\
\end{array}\right. 
\end{align}
We seek a solution $\psishL \in \LLko{\infty}{\LpM{2}} \cap \LLko{2}{\hat{X}}$  that,  for any $\varphih \in W^{1,1}(0,T;\hat{X})$  with $\varphih(T,\cdot,\cdot)=0,$  satisfies 
\begin{equation}\label{FPL_weak}
\begin{aligned} 
& -\intOTOR{M \psishL \pd{\varphih}{t}} + \intOTOR{M \bigg[ \varepsilon  \nablax\psishL-\us\psishL\bigg] \cdot \nablax \varphih }  + \\
& + \intOTOR{M\bigg[\Gamma(\trCs)\,\nablaR\psishL - \big[(\nablax\us)\Rko\big]\betaL(\psishL)\bigg] \cdot \nablaR\varphih} = & \\
&  = \intOR{M \betaL(\psih_0)\varphih}  . 
\end{aligned}
\end{equation}

In order to prove the existence of weak solutions to ($\text{FP}_{\text{L}}$)  we study a discrete-in-time approximation of \eqref{FPL_weak}. To this end we consider a regular mesh $\{t_1,\ldots,t_N\}$ on the time interval $[0,T].$ For $n=1,\ldots,N,$ we seek the values $\psishL^n \in \hat{Z}_2 \cap \hat{X}$  representing the approximate solution. By  $\psishL^{\Delta t}(t,\cdot)$ we denote the piecewise linear approximation of $\psishL(t,\cdot).$ Further,  we employ a collective symbol $\psishL^{\Delta t(,\pm)}$  for $\psishL^{\Delta t}$ and the values $\psishL^{n},$ $\psishL^{n-1}$ at the end points of the interval $[t_{n-1},t_n].$ For more details  we refer to the works of Barret and S\"uli \cite{BaSu,BaSu2,BaSu6}.

 Now, we recall the most important uniform estimates. As first, it can be shown, cf., Lemma~4.3 in \cite{BaSu6}, that an arbitrary $r$-th moment of the approximate solution is uniformly bounded. 

\begin{lem}(uniform bounds on the moments)\label{lem:moments} \\
Let the assumptions \eqref{us_Cs} and \eqref{FP_ic} be satisfied. Then we have, for any $r \in \mathbb R^+_0,$ that 
\begin{align*}
\intOR{M|\Rko|^r \psishL^n} \leq c, \quad n=0,\ldots,N.
\end{align*}
\end{lem}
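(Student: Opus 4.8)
The plan is to test the fully discrete (in time) scheme defining $\psishL^n$ with the Maxwellian-admissible weight $\varphih=|\Rko|^r$ and to close an induction on $r$, after first reducing the general case to that of even integers. Writing $m_r^n:=\intOR{M|\Rko|^r\psishL^n}$, I first note that each $m_r^n$ is finite for fixed $n$, $L$, $\Delta t$: since $\psishL^n\in\hat{Z}_2\cap\hat{X}\subset\LpM{2}$ and $M$ is Gaussian, Cauchy--Schwarz gives $m_r^n\le\big(\intOR{M|\Rko|^{2r}}\big)^{1/2}\n{2}{\psishL^n}<\infty$. The content of the lemma is therefore the uniformity of the bound in $n$, $L$ and $\Delta t$. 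Because the constraint $\intR{M\psishL^n}\le1$ is built into $\hat{Z}_2$, the total mass is $m_0^n=\intOR{M\psishL^n}\le|\Omega|$, and Hölder's inequality with respect to the finite measure $M\psishL^n\,d\Rko\,dx$ yields, for $0\le r\le 2k$, the interpolation $m_r^n\le(m_0^n)^{1-r/2k}(m_{2k}^n)^{r/2k}$. Hence it suffices to bound $m_{2k}^n$ uniformly for every even integer $2k$, the general $r$ following at once.

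For even $r=2k\ge 2$ I would take $\varphih=|\Rko|^{2k}\in\hat{X}$, which depends neither on $t$ nor on $x$, so that $\nablax\varphih=0$ and the entire $x$-flux bracket $\varepsilon\nablax\psishL^n-\us\psishL^n$ of \eqref{FPL_weak} drops out, leaving only the time difference and the two $\nablaR$-terms. Integrating the diffusion term by parts in $\Rko$ and using $\nablaR M=-\Rko M$ (valid since $a=1$ after the non-dimensionalisation of Lemma~\ref{lem:nondim_FP}), together with $\nablaR\varphih=r|\Rko|^{r-2}\Rko$ and $\DeltaR\varphih=r(r+d-2)|\Rko|^{r-2}$, gives
\begin{align*}
\intOR{M\,\Gamma(\trCs)\,\nablaR\psishL^n\cdot\nablaR\varphih}=\intOR{\Gamma(\trCs)\,M\psishL^n\big[r|\Rko|^r-r(r+d-2)|\Rko|^{r-2}\big]},
\end{align*}
the boundary terms at $|\Rko|\to\infty$ vanishing by the Gaussian decay of $M$ and the finiteness of the moments. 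The top-order part $r\intOR{\Gamma(\trCs)M\psishL^n|\Rko|^r}$ is nonnegative and may be discarded, while the remainder is a genuine lower even moment controlled by $c\,m_{r-2}^n$, since $\n{\LLko{\infty}{L^{\infty}(\Omega)^{d\times d}}}{\Gamma(\trCs)}\le c$. For the drift term I use $[(\nablax\us)\Rko]\cdot\Rko\le|\nablax\us|\,|\Rko|^2$ and the pointwise bound $\betaL(\psishL^n)\le\psishL^n$, together with $\us\in\LLko{\infty}{H^3(\Omega)^2}$ and the embedding $H^2(\Omega)\hookrightarrow L^\infty(\Omega)$ in $d=2$, which give $\nablax\us\in\LLko{\infty}{L^\infty(\Omega)^{2\times2}}$, to obtain
\begin{align*}
\left|\intOR{M\,[(\nablax\us)\Rko]\,\betaL(\psishL^n)\cdot\nablaR\varphih}\right|\le r\,\|\nablax\us\|_{L^\infty}\,m_r^n.
\end{align*}

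Collecting these estimates turns the $n$-th equation of the scheme into the discrete differential inequality
\begin{align*}
m_r^n-m_r^{n-1}\le\Delta t\,\big[r(r+d-2)\,c\,m_{r-2}^n+r\,\|\nablax\us\|_{L^\infty}\,m_r^n\big].
\end{align*}
Assuming inductively that $\sup_n m_{r-2}^n\le C_{r-2}$ (the base case $r=2$ resting on $m_0^n\le|\Omega|$), a discrete Gronwall lemma for $\Delta t$ small enough yields $m_r^n\le e^{cT}\big(m_r^0+cT\,C_{r-2}\big)$ uniformly in $n$, $L$ and $\Delta t$. The initial moment is controlled independently of $L$ through $m_r^0=\intOR{M|\Rko|^r\betaL(\psih_0)}\le\intOR{M|\Rko|^r\psih_0}\le\big(\intOR{M|\Rko|^{2r}}\big)^{1/2}\n{2}{\psih_0}$, finite by \eqref{FP_ic}. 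This closes the induction over even integers, and the interpolation of the first paragraph delivers the bound for all $r\in\mathbb R^+_0$.

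I expect the main obstacle to be the book-keeping of uniformity rather than any single estimate: the Gronwall constant must stay independent of the number of steps $N$ and of the cut-off level $L$. The independence of $L$ is precisely why one uses $\betaL(\psishL^n)\le\psishL^n$ (an inequality, not the identity) in the drift term and estimates the initial data through $\psih_0\in\LpM{2}$ rather than $\betaL(\psih_0)$; the independence of $N$ requires $\Delta t\,r\|\nablax\us\|_{L^\infty}<1$ so that the implicit contribution $m_r^n$ can be absorbed before iterating. A secondary technical point is the rigorous justification of the integration by parts against the unbounded weight $|\Rko|^r$, which, if desired, can be performed on the truncations $\min(|\Rko|^r,j)$ followed by a monotone passage $j\to\infty$, the per-step finiteness of $m_r^n$ guaranteeing that no mass escapes in the limit.
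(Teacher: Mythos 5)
Your proof is correct and is essentially the argument the paper relies on: the paper does not prove this lemma itself but cites Lemma~4.3 of Barrett--S\"uli, remarking only that $\n{\LLko{\infty}{L^{\infty}(\Omega)}}{\Gamma(\trCs)}\le c$ (which follows from \eqref{growth_cond} and $\Cs\in\LLko{\infty}{H^2(\Omega)^{d\times d}}\hookrightarrow\LLko{\infty}{L^{\infty}(\Omega)^{d\times d}}$) allows that proof to be repeated verbatim. Your reconstruction --- testing the per-step scheme with the even moments $|\Rko|^{2k}\in\hat{X}$, integrating by parts against the Gaussian $M$ with $\nablaR M=-\Rko M$, discarding the sign-definite top-order term, absorbing the drift contribution via a discrete Gronwall argument valid for $\Delta t$ small (depending on the fixed $r$), bounding the initial moment through $\betaL(\psih_0)\le\psih_0\in\LpM{2}$, and interpolating to non-even $r$ --- is precisely that argument, with the uniformity in $n$, $L$ and $\Delta t$ handled correctly.
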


The next estimate implies that the solution $\psishL^{\Delta t}$ has finite Fisher information and finite relative entropy with respect to the Maxwellian $M.$ We refer to Lemma~4.4 in \cite{BaSu6}.

\begin{lem}(finite Fisher information and relative entropy)\label{lem:Fisher} \\
Under the assumptions of Lemma~\ref{lem:moments} it holds that 
\begin{equation}
\begin{aligned}
&\esssup_{t\in[0,T]} \intOR{M\mathcal{F}(\psishL^{\Delta t(,\pm)})(t)} + \frac{1}{\Delta t L}\intOR{M(\psishL^{\Delta t,+}-\psishL^{\Delta t,-})^2} + & \label{rel_en_est}\\
& + \intOTOR{M\Bigg[ \left|\nablax\sqrt{\psishL^{\Delta t(,\pm)}}\right|^2 + \left|\nablaR\sqrt{\psishL^{\Delta t(,\pm)}}\right|^2\Bigg]}\leq c.
\end{aligned}
\end{equation}
Moreover, we have that 
\begin{align*}
\left|\intOTOR{M \pd{\psishL^{\Delta t}}{t} \varphih }\right| \leq c\|\varphih\|_{\LLko{2}{W^{1,\infty}(\Omega\times\Rd)}} \quad \forall \varphih \in \LLko{2}{W^{1,\infty}(\Omega\times\Rd)}.
\end{align*}
\end{lem}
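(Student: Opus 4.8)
The plan is to obtain \eqref{rel_en_est} by testing the implicit time step underlying \eqref{FPL_weak} with the entropy variable and using convexity, and then to read the bound on $\pd{\psishL^{\Delta t}}{t}$ off \eqref{FPL_weak} directly. Let $\mathcal F(s):=s(\ln s-1)+1$, so that $\mathcal F\ge0$, $\mathcal F'(s)=\ln s$, $\mathcal F''(s)=1/s>0$, and $\intOR{M\mathcal F(\psishL)}$ is the relative entropy of $M\psishL$ with respect to the Maxwellian $M$. A fact used throughout is that $\Gamma(\trCs)$ is bounded above and below away from zero: by \eqref{us_Cs} and the embedding $H^2(\Omega)\hookrightarrow L^\infty(\Omega)$ in two dimensions, $\trCs\in\LLko{\infty}{L^\infty(\Omega)}$ takes values in a compact set on which the continuous positive function $\gt$ has a positive minimum and a finite maximum, whence $0<c_0\le\Gamma(\trCs)\le c_1$ a.e. This is the only structural point where the Peterlin coefficient enters.

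First I would test the $n$-th step with $\varphih=\ln\psishL^n$; since this need not lie in $\hat{X}$ where $\psishL^n$ vanishes, one works with $\ln(\psishL^n+\delta)$ and lets $\delta\to0$, which is the first technical point. The discrete time difference is handled by the convexity inequality $\mathcal F'(\psishL^n)(\psishL^n-\psishL^{n-1})\ge\mathcal F(\psishL^n)-\mathcal F(\psishL^{n-1})$, which after multiplication by $M$ and integration gives a telescoping entropy difference, while strict convexity of $\mathcal F$ for the implicit step accounts for the nonnegative numerical dissipation $\tfrac{1}{\Delta t\,L}\intOR{M(\psishL^{\Delta t,+}-\psishL^{\Delta t,-})^2}$ (cf.\ Lemma~4.4 in \cite{BaSu6}). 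From $\nablax\psishL^n\cdot\nablax\ln\psishL^n=4|\nablax\sqrt{\psishL^n}|^2$ and the analogous identity in $\Rko$, the two diffusion terms become $4\varepsilon\intOR{M|\nablax\sqrt{\psishL^n}|^2}$ and $4\intOR{M\Gamma(\trCs)|\nablaR\sqrt{\psishL^n}|^2}\ge 4c_0\intOR{M|\nablaR\sqrt{\psishL^n}|^2}$, i.e.\ the Fisher-information terms on the left of \eqref{rel_en_est}. The $x$-transport term vanishes, since with $\varphih=\ln\psishL^n$ it equals $-\intOR{M\us\cdot\nablax\psishL^n}$ and $\dix\us=0$, $\us|_{\partial\Omega}=\mathbf{0}$ together with $M=M(\Rko)$ make the integral over $\Omega$ zero.

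The only term without a sign is the $\Rko$-drift; writing $\nablaR\psishL^n=2\sqrt{\psishL^n}\,\nablaR\sqrt{\psishL^n}$ and using $\betaL(s)/\sqrt{s}\le\sqrt{s}$ together with Young's inequality,
\begin{align*}
\left|\intOR{M[(\nablax\us)\Rko]\betaL(\psishL^n)\cdot\nablaR\ln\psishL^n}\right|
&\le \delta\intOR{M|\nablaR\sqrt{\psishL^n}|^2}\\
&\quad+C_\delta\,\n{L^\infty}{\nablax\us}^{2}\intOR{M|\Rko|^2\psishL^n}.
\end{align*}
The last integral is bounded by Lemma~\ref{lem:moments} with $r=2$, and $\delta$ is chosen small enough to be absorbed into the $\Rko$-Fisher information. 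Summing over $n$, using that $\intOR{M\mathcal F(\psih_0)}$ is finite (because $|\mathcal F(s)|\le C(1+s^2)$ and $\psih_0\in\LpM2$) and that the moment bounds are uniform, and taking the maximum over the time nodes yields \eqref{rel_en_est} for the piecewise-constant interpolants; the piecewise-linear case follows from convexity of $\mathcal F$. For the time-derivative bound I would use the scheme behind \eqref{FPL_weak} to express $\intOTOR{M\pd{\psishL^{\Delta t}}{t}\varphih}$ as minus the sum of the flux terms tested against $\varphih$ and estimate each by $\|\varphih\|_{\LLko{2}{W^{1,\infty}(\Omega\times\Rd)}}$: the diffusion terms via $\nablax\psishL^{\Delta t}=2\sqrt{\psishL^{\Delta t}}\,\nablax\sqrt{\psishL^{\Delta t}}$, Cauchy--Schwarz, $\intOR{M\psishL^{\Delta t}}\le|\Omega|$ and \eqref{rel_en_est}; the transport term via $\n{L^\infty}{\us}$; and the drift via $\betaL(s)\le s$, $\n{L^\infty}{\nablax\us}$ and Lemma~\ref{lem:moments} with $r=1$. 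All constants are independent of $L$ and $\Delta t$.

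The main obstacle is precisely the absorption of the non-sign-definite $\Rko$-drift into the $\Rko$-Fisher information: here both the cut-off $\betaL$ and the moment bound of Lemma~\ref{lem:moments} are indispensable, and --- the single novelty relative to the Hookean analysis of \cite{BaSu6} --- one must know that $\Gamma(\trCs)$ stays bounded below by a positive constant, which is exactly the bound $\Gamma(\trCs)\ge c_0$ established at the outset. With that in hand the entropy argument of \cite{BaSu6} transfers unchanged.
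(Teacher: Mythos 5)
Your overall architecture (discrete entropy estimate, Fisher-information identity, moment bounds for the drift, reading the time-derivative bound off the scheme) is the right one, and your observation that one needs the \emph{two-sided} bound $0<c_0\le\Gamma(\trCs)\le c_1$ --- the lower bound being what makes the $\Rko$-Fisher information appear with a uniform constant --- is correct and in fact sharper than the paper's own remark, which records only the upper bound before deferring entirely to Lemma~4.4 of \cite{BaSu6}. However, there is a genuine gap at the numerical-dissipation term: testing with $\mathcal{F}'(\psishL^n)=\ln\psishL^n$ and invoking strict convexity of $\mathcal{F}$ does \emph{not} produce $\frac{1}{\Delta t\, L}\intOR{M(\psishL^{\Delta t,+}-\psishL^{\Delta t,-})^2}$. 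Taylor's theorem gives the pointwise dissipation $\tfrac12\mathcal{F}''(\xi)\,(\psishL^{n}-\psishL^{n-1})^2$ with $\xi$ between $\psishL^{n-1}$ and $\psishL^{n}$, and $\mathcal{F}''(\xi)=1/\xi$ admits no lower bound of size $1/L$: the cut-off $\betaL$ caps only the drift coefficient (and the initial datum), not the solution itself, so $\psishL^{n}$ may exceed $L$ on a set of positive measure, where $1/\xi$ is arbitrarily small. Since this middle term of \eqref{rel_en_est} is exactly what is used later (together with the hypothesis $\Delta t\le(4L^2)^{-1}$ of Theorem~\ref{theo:weak_FP}) to force $\psishL^{\Delta t,+}$ and $\psishL^{\Delta t,-}$ to converge to the same limit, the gap is not cosmetic.

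The device that yields this term in \cite{BaSu6} is a modified entropy rather than $\mathcal{F}$ itself: one takes $\mathcal{F}^L=\mathcal{F}$ on $[0,L]$, extended quadratically for $s\ge L$, so that $[\mathcal{F}^L]''(s)=1/\betaL(s)\ge 1/L$ everywhere while still $[\mathcal{F}^L]''(s)\ge 1/s$, and tests with $[\mathcal{F}^L]'(\psishL^n+\delta)$ before letting $\delta\to0$. This simultaneously (i) produces the $\frac{1}{\Delta t\,L}$ dissipation from $[\mathcal{F}^L]''\ge 1/L$, (ii) retains the Fisher-information terms since $[\mathcal{F}^L]''(\psishL^n)\,|\nabla\psishL^n|^2\ge 4|\nabla\sqrt{\psishL^n}|^2$, and (iii) makes the non-sign-definite drift collapse exactly, because $\betaL(s)\,[\mathcal{F}^L]''(s)=1$: after integration by parts in $\Rko$, using $\nablaR M=-M\Rko/a$ and $\dix\us=0$, it reduces to $\intOR{M\,\psishL^n\,((\nablax\us)\Rko)\cdot\Rko}$, which is bounded directly by Lemma~\ref{lem:moments}, with no Young absorption and hence no use of the lower bound on $\Gamma$ at that point (the lower bound is still needed to pass from $\intOTOR{M\,\Gamma(\trCs)|\nablaR\sqrt{\psishL^{\Delta t}}|^2}\le c$ to the stated estimate). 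One then recovers the statement with plain $\mathcal{F}$ from $\mathcal{F}\le\mathcal{F}^L$ on $[L,\infty)$. Your Young-inequality treatment of the drift and your time-derivative estimate are fine as written; the proof becomes complete once you replace the test function $\ln\psishL^n$ by $[\mathcal{F}^L]'(\psishL^n+\delta)$.
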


The function  $\mathcal{F} \in C(\mathbb R^+)$ appearing in \eqref{rel_en_est} is given by $\mathcal{F}(s)\defeq s(\log s - 1)+1. $ As pointed out in \cite{BaSu6} it is a non-negative, strictly convex function that can be considered to be defined on $[0,\infty]$ with $\mathcal{F}(1)=0.$

\subsection{Existence of weak solutions to \textnormal{(FP)} }

Passage to the limit with $L \rightarrow \infty$ implies the existence of weak solution to (FP) as shown in Theorem~4.1 in \cite{BaSu6}.

\begin{theo}(existence of weak solution to \textnormal{(FP)})\label{theo:weak_FP}\\
Let the assumptions  \eqref{us_Cs}, \eqref{FP_ic} be satisfied, and let $\Delta t \leq (4L^2)^{-1}$ as $L \rightarrow \infty.$ Then, there exists a subsequence of $\{\psishL^{\Delta t}\}_{L>1}$, and a function $\psish$ such that 
\begin{align*}
|\Rko|^r\psish \in \LLko{\infty}{\LpM{1}}, \quad \mbox{ for any } r \in [0,\infty), \\
\psish \in H^1(0,T;M^{-1}[\HkM{s}]'), \quad \mbox{ for any } s > d+1, 
\end{align*}
with \begin{align*}
\psish \geq 0 \ \mbox{a.e. on } [0,T]\times \Omega \times \Rd \ \mbox{ and } \ \intR{M(\Rko)\psish(t,x,\Rko)} \leq 1 \ \mbox{for a.e. } (x,t) \in [0,T]\times \Omega,
\end{align*}
and finite relative entropy and Fisher information, with 
\begin{align*}
\mathcal{F}(\psish) \in \LLko{\infty}{\LpM{1}} \quad \mbox{ and } \quad \sqrt{\psish} \in \LLko{2}{\HkM{1}},
\end{align*}
such that as $L \rightarrow \infty$ (and $\Delta t \rightarrow 0$) 
\begin{align*}
M^{1/2}\nablax \sqrt{\psishL^{\Delta t(,\pm)}} \rightarrow M^{1/2}\nablax\sqrt{\psish}  & & \mbox{ weakly in } \LLko{2}{L^2(\Omega\times\Rd)},\\
M^{1/2}\nablaR \sqrt{\psishL^{\Delta t(,\pm)}} \rightarrow M^{1/2}\nablaR\sqrt{\psish}  & & \mbox{ weakly in } \LLko{2}{L^2(\Omega\times\Rd)},\\
M\pd{\psishL^{\Delta t}}{t}\rightarrow M \pd{\psish}{t} & & \mbox{ weakly in } \LLko{2}{[H^s(\Omega\times\Rd)]'},\\
|\Rko|^r \beta^{L}(\psishL^{\Delta t(,\pm)}), \, |\Rko|^r\psishL^{\Delta t(,\pm)} \rightarrow |\Rko|^r \psish  & & \mbox{ strongly in } \LLko{p}{\LpM{1}},
\end{align*}
for any $p \in [1,\infty).$

Additionally, for $s > d+1,$ the function $\psish$ satisfies 
\begin{equation}\label{FP_weak}
\begin{aligned} 
& -\intOTOR{M \psish \pd{\varphih}{t}} + \intOTOR{M \bigg[ \varepsilon  \nablax\psish-\us\psish\bigg] \cdot \nablax \varphih }  + \\
& + \intOTOR{M\bigg[\Gamma(\trCs)\,\nablaR\psish - \big[(\nablax\us)\Rko\big]\psish\bigg] \cdot \nablaR\varphih} = & \\
&  = \intOR{M \psih_0(x,\Rko)\varphih(0,x,\Rko)} , \quad \forall \varphih \in W^{1,1}(0,T;\hat{X}) \mbox{ with } \varphi(T,\cdot,\cdot)=0.
\end{aligned}
\end{equation}
\end{theo}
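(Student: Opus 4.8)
The plan is to pass to the limit $L\to\infty$ (with $\Delta t\le(4L^2)^{-1}\to0$) in the discrete weak formulation \eqref{FPL_weak}, exploiting that all the bounds of Lemma~\ref{lem:moments} and Lemma~\ref{lem:Fisher} are uniform in $L$ and $\Delta t$. First I would read off from \eqref{rel_en_est} that $M^{1/2}\nablax\sqrt{\psishL^{\Delta t(,\pm)}}$ and $M^{1/2}\nablaR\sqrt{\psishL^{\Delta t(,\pm)}}$ are bounded in $\LLko{2}{L^2(\Omega\times\Rd)}$, that the relative entropy $\intOR{M\mathcal{F}(\psishL^{\Delta t(,\pm)})}$ is bounded in $L^\infty(0,T)$, and that the moments $\intOR{M|\Rko|^r\psishL^{\Delta t(,\pm)}}$ are bounded for every $r\ge0$; from the second estimate in Lemma~\ref{lem:Fisher} I would deduce that $M\,\partial_t\psishL^{\Delta t}$ is bounded in $\LLko{2}{[H^s(\Omega\times\Rd)]'}$ for $s>d+1$, using $H^s\hookrightarrow W^{1,\infty}$ on the $2d$-dimensional domain. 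Passing to subsequences yields weak limits; the substance of the proof is to identify them.

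Strong compactness is obtained through Dubinskiĭ's theorem (Theorem~\ref{theo:dubinski}), applied with $\mathcal{A}_0=\LpM{1}$, with $\mathcal{A}_1=M^{-1}[\HkM{s}]'$ so that the time-derivative bound places $\partial_t\psishL^{\Delta t}$ in $\LLko{2}{\mathcal{A}_1}$, and with $\mathcal{M}$ the semi-normed subset of $\LpM{1}$ whose seminorm is the Fisher information $\intOR{M[|\nablax\sqrt{\varphih}|^2+|\nablaR\sqrt{\varphih}|^2]}$ reinforced by the moment bounds; the essential ingredient is the compact embedding $\mathcal{M}\hookrightarrow\LpM{1}$. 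This produces, along a subsequence, strong convergence of $|\Rko|^r\psishL^{\Delta t(,\pm)}$ in $\LLko{p}{\LpM{1}}$ for every $p\in[1,\infty)$, and hence $\psishL^{\Delta t(,\pm)}\to\psish$ a.e. Since the convergence $\psishL^{\Delta t(,\pm)}\to\psish$ holds in $\LpM{1}$ with all factors nonnegative, it follows that $M^{1/2}\sqrt{\psishL^{\Delta t(,\pm)}}\to M^{1/2}\sqrt{\psish}$ strongly in $L^2$; combining this with the weak $L^2$ limits above and uniqueness of the distributional limit identifies those limits with $M^{1/2}\nablax\sqrt{\psish}$ and $M^{1/2}\nablaR\sqrt{\psish}$, respectively.

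With these convergences available I would pass to the limit termwise in \eqref{FPL_weak}. The time-derivative term passes by the weak convergence of $M\,\partial_t\psishL^{\Delta t}$; the transport contribution $\us\psishL^{\Delta t(,\pm)}$ converges strongly because $\us$ is fixed in $L^\infty$ by \eqref{us_Cs}; and the two diffusion terms are handled by writing $M\nablax\psishL=2(M^{1/2}\sqrt{\psishL})(M^{1/2}\nablax\sqrt{\psishL})$ and likewise for $\nablaR$, so that each is a product of a strongly convergent factor and a weakly convergent factor in $L^2$. It is precisely here that the uniform bound $\n{\LLko{\infty}{L^{\infty}(\Omega)^{d\times d}}}{\Gamma(\trCs)}\le c$ enters: the otherwise $L$-independent coefficient $\Gamma(\trCs)$ is bounded and can be absorbed into the test function against the weak limit. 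Removal of the cut-off in the convective term, i.e. $\betaL(\psishL^{\Delta t(,\pm)})\to\psish$ in $\LLko{p}{\LpM{1}}$, follows from $|\psishL-\betaL(\psishL)|=(\psishL-L)^+$ together with the uniform integrability of $\{M\psishL\}$ supplied by the relative-entropy bound via the de la Vallée–Poussin criterion, while the initial datum $\betaL(\psih_0)\to\psih_0$ by \eqref{FP_ic}.

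I expect the genuine obstacle to be the nonlinear drift term $\intOTOR{M[(\nablax\us)\Rko]\betaL(\psishL^{\Delta t(,\pm)})\cdot\nablaR\varphih}$, where the cut-off must be removed in the presence of the \emph{unbounded} multiplier $|\Rko|$: one has to combine the control of the tails in $\Rko$ from the moment bounds of Lemma~\ref{lem:moments} with the uniform integrability controlling the region $\{\psishL>L\}$, so that both the large-$|\Rko|$ and the large-value contributions vanish as $L\to\infty$ (a Hölder split of $\intOR{M|\Rko|(\psishL-L)^+}$ using $\intOR{M|\Rko|^2\psishL}\le c$ does this). Finally, the qualitative properties of $\psish$ — nonnegativity, the normalization $\intR{M\psish}\le1$, and finiteness of the relative entropy and Fisher information — follow from the a.e. limit combined with weak lower semicontinuity (Fatou) applied to the bounds of Lemmas~\ref{lem:moments} and~\ref{lem:Fisher}, which then certifies the claimed regularity $\mathcal{F}(\psish)\in\LLko{\infty}{\LpM{1}}$ and $\sqrt{\psish}\in\LLko{2}{\HkM{1}}$.
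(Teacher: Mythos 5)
Your proposal is correct and follows essentially the same route as the paper: the paper's own proof consists of observing that $\Gamma(\trCs)$ is uniformly bounded in $\LLko{\infty}{L^\infty(\Omega)}$ and then invoking the argument of Section~4 of Barrett--S\"uli \cite{BaSu6}, whose key steps (the uniform moment bounds of Lemma~\ref{lem:moments}, the entropy/Fisher-information bounds of Lemma~\ref{lem:Fisher}, Dubinski\u{\i} compactness with $\mathcal A_0=\LpM{1}$ and the Fisher-information semi-normed set, strong convergence in $\LLko{p}{\LpM{1}}$, removal of the cut-off $\betaL$ via entropy and moment control, and Fatou-type arguments for the qualitative properties) are exactly those you reconstruct. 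The single new ingredient the paper isolates --- the uniform $L^\infty$ bound on $\Gamma(\trCs)$ that lets the Hookean proof be repeated verbatim --- enters your argument in precisely the right place, namely the limit passage in the $\Rko$-diffusion term.
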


\subsection{Macroscopic closure}

In what follows we shall discuss the rigorous macroscopic closure of the Fokker-Planck equation. 
It can be shown that 
under the assumptions of Theorem~\ref{theo:weak_FP} the weak solution $\psish$ to problem (FP) is such that 
\begin{align*}
\Ccko(M\psish)=\Ccko(M\psish)^T \in \LLko{\infty}{L^2(\Omega)^{d\times d}} \cap\LLko{2}{H^1(\Omega)^{d \times d}},
\end{align*}
and it satisfies, for any $\Dcko \in W^{1,1}(0,T; H^1(\Omega)^{d \times d}),$
\begin{equation}\label{weak_closure}
\begin{aligned}
& -\intoto{T}{\Ccko(M\psish) : \pd{\Dcko}{t}} + \intoto{T}{\varepsilon  \nablax\Ccko(M\psish):\nablax\Dcko-(\us\cdot\nablax)\Dcko:\Ccko(M\psish)  } -& \\
& -\intoto{T}{(\nablax\us)\Ccko(M\psish)+\Ccko(M\psish)(\nablax\us)^T}-  \intoto{T}{2\Gamma(\trCs)\bigg[\Icko -\Ccko(M\psish)\bigg]:\Dcko} = & \\
&  = \intOR{\Ccko(M\psih_0)(x):\Dcko(0,x)} .
\end{aligned}
\end{equation}
Let us note that by \eqref{m2_Mcond} and \eqref{Gamma_vareps} it holds that 
\begin{align*}
 \intoto{T}{2\Gamma(\trCs)\bigg[\Icko -\Ccko(M\psish)\bigg]:\Dcko}= \intoto{T}{\left[\frac{\gt(\trCs)}{\lambda}\Icko -\frac{\go(\trCs)}{\lambda\gamma_{\tM}}\Ccko(M\psish)\right]:\Dcko}.
\end{align*}

For the careful derivation we refer to Lemmas~4.2, 4.5 and 4.6 in \cite{BaSu6}. The main idea is to test the semi-discrete approximation of the Fokker-Planck equation with $\Rko\otimes\Rko:\Dcko \in \hat{X}$ for $\Dcko\in C^{\infty}(\bar{\Omega}).$  The definition of the conformation tensor along with some useful identities mentioned below yields all the terms in \eqref{weak_closure} except the term containing the gradient $\nablaR\psishL^n.$  In the latter term we have to integrate by parts with respect to $\Rko,$ which requires the approximation of $\psishL^n$ by a sequence of smooth functions. The dense embedding of $C^{\infty}(\bar{\Omega};C_0^{\infty}(\Rd))$ in $\hat{X},$ cf., \cite{BaSu2}, then implies that the closure is indeed valid for $\psishL^n \in \hat{X}$ itself. 

Here we only present  \textit{formal} macroscopic closure of the weak formulation \eqref{FP_weak}. To this end let $\varphih$ in \eqref{FP_weak} be $\Rko\otimes\Rko:\Dcko$  with $\Dcko\in W^{1,1}(0,T;C^{\infty}(\bar{\Omega}))$ such that $\Dcko(T,\cdot)=0.$ Taking into account the definition of the conformation tensor $\displaystyle \Ccko(\psi)\defeq \langle\Rko\otimes\Rko\rangle=\intR{\Rko\otimes\Rko \,\psi(t,x,\Rko)}$ we directly get  
\begin{equation*}
\begin{aligned} 
& -\intoto{T}{\Ccko(M\psish) : \pd{\Dcko}{t}} + \intoto{T}{\varepsilon  \nablax\Ccko(M\psish):\nablax\Dcko-(\us\cdot\nablax)\Dcko:\Ccko(M\psish)  } + \\
& + \intOTOR{M\bigg[\Gamma(\trCs)\,\nablaR\psish - \big[(\nablax\us)\Rko\big]\psish\bigg] \cdot \nablaR(\Rko\otimes\Rko:\Dcko)} = & \\
&  = \intOR{\Ccko(M\psih_0)(x):\Dcko(0,x)}.
\end{aligned}
\end{equation*}

Moreover, for any $a\in \Rd$ it holds that $(a\cdot\nablaR)(\Rko\otimes\Rko)=a\Rko^T+\Rko a^T.$ Thus, the latter identity with  $a=M\big[(\nablax\us)\Rko\big]\psish$ yields
\begin{align*}
&\intOTOR{M\big[(\nablax\us)\Rko\big]\psish \cdot \nablaR(\Rko\otimes\Rko:\Dcko)}& \\
&\hskip 5cm  =\intoto{T}{(\nablax\us)\Ccko(M\psish)+\Ccko(M\psish)(\nablax\us)^T}.
\end{align*}
Further, \textit{formal} integration by parts, which is rigorously done by employing the density argument mentioned above, yields the  term 
\begin{equation*}
\begin{aligned} 
&\intOTOR{M\Gamma(\trCs)\,\nablaR\psish  \cdot \nablaR(\Rko\otimes\Rko:\Dcko)} = & \\
&\hskip 5cm   = - \intOTOR{ \Gamma(\trCs)\,\psish \bigg[\nablaR \cdot \big(M\nablaR(\Rko\otimes\Rko)\big)\bigg]:\Dcko}.
\end{aligned}
\end{equation*}
By \eqref{Maxwellian} and identity $\DeltaR (\Rko\otimes\Rko)=2\Icko,$ we finally obtain
\begin{equation*}
\begin{aligned} 
&- \intOTOR{ \Gamma(\trCs)\,\psish \bigg[\nablaR \cdot \big(M\nablaR(\Rko\otimes\Rko)\big)\bigg]:\Dcko}=& \\
&\hskip 5cm =-  \intoto{T}{2\Gamma(\trCs)\bigg[\Icko -\Ccko(M\psish)\bigg]:\Dcko},
\end{aligned}
\end{equation*}
and thus equation \eqref{weak_closure}.


\section{The existence result for the kinetic Peterlin model}\label{sec:ex_kinetic}

In what follows we combine the results from the previous two sections to prove the existence of large-data global-in-time weak solutions to the kinetic Peterlin model (KP). Let us note that the weak solutions to the Fokker-Planck equation for a given pair $(\us,\Cs)$ exist in both two and three space dimensions, cf., Section~\ref{sec:FP}. However, the main result is only valid in two space dimensions due to the uniqueness result for the macroscopic model from Section~\ref{sec:uniq_macro}.

\begin{theo}(existence of weak solution to \textnormal{(KP)})\label{theo:weak_KP}\\
Let $d=2$ and $\Omega$ be of class $C^3.$ Let $\ucko_0 \in V \cap H^3(\Omega)^2$ and let $\psih_0$ satisfy \eqref{FP_ic} with $\Ccko_0:=\Ccko(M\psih_0) \in H^3(\Omega)^{2\times 2}.$ It follows that there exists a couple $(\ucko_{\tP},\Ccko_{\tP})$ satisfying \eqref{reg_weak_MP} and solving the weak formulation  \eqref{weak_MP} of the Peterlin model. 

In addition, there exists $\psih_{\tP}$ satisfying 
\begin{subequations}\label{psis_solution}
\begin{align}
|\Rko|^r\psih_{\tP} \in \LLko{\infty}{\LpM{1}}, \quad \mbox{ for any } r \in [0,\infty), 
\end{align}
with
\begin{align}
\psih_{\tP} \geq 0 \ \mbox{ a.e. on} [0,T] \times \Omega \times \Rd \ \mbox{and} \ \intR{M(\Rko)\psih_P(t,\xko,\Rko)} =1 \ \mbox{ for a.e.} (t,\xko) \in [0,T]\times\Omega,\\
 \mathcal{F}(\psih_P) \in \LLko{\infty}{\LpM{1}} \ \mbox{ and } \ \sqrt{\psih_P} \in \LLko{2}{\HkM{1}},
\end{align}
and solving 
\begin{align}
& -\intOTOR{M \psih_{\tP} \pd{\varphih}{t}} + \intOTOR{M \bigg[ \varepsilon  \nablax\psih_{\tP}-\ucko_{\tP}\psih_{\tP}\bigg] \cdot \nablax \varphih }  + \\
& + \intOTOR{M\bigg[\Gamma(\trC_{\tP})\,\nablaR\psih_{\tP} - \big[(\nablax\ucko_{\tP})\Rko\big]\psih_{\tP}\bigg] \cdot \nablaR\varphih} = \\
& = \intOR{M \psih_0(x,\Rko)\varphih(0,x,\Rko)},\quad  \forall \varphih \in W^{1,1}(0,T;\hat{X}) \ \mbox{ with } \ \varphi(T,\cdot,\cdot)=0. 
\end{align}
Moreover, we have that $\Ccko_{\tP}=\Ccko(M\psih_{\tP}).$ 
\end{subequations}
\end{theo}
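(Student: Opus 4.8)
The plan is to decouple the coupled kinetic system into three manageable steps: first solve the closed macroscopic model (MP) for the pair $(\ucko_{\tP},\Ccko_{\tP})$, then solve the Fokker--Planck problem (FP) with this macroscopic velocity and conformation tensor frozen as data, and finally reconcile the two descriptions by identifying the conformation tensor $\Ccko(M\psih_{\tP})$ produced by the kinetic density with the macroscopic tensor $\Ccko_{\tP}$. The key point is that no fixed-point iteration is needed: since (MP) is already a self-contained system for $(\ucko,\Ccko)$, the microscopic density is reconstructed afterwards, and self-consistency of the full model is enforced through a uniqueness argument for the macroscopic equation.

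First I would invoke the macroscopic theory of Section~\ref{sec:uniq_macro}. Under the present hypotheses ($d=2$, $\Omega$ of class $C^3$, $\ucko_0\in V\cap H^3(\Omega)^2$, and $\Ccko_0:=\Ccko(M\psih_0)\in H^3(\Omega)^{2\times 2}$) together with the standing constitutive assumptions, Theorem~\ref{theo:uniq1} and Corollary~\ref{coro:uniq2} apply, so there is a unique regular weak solution $(\ucko_{\tP},\Ccko_{\tP})$ of (MP) enjoying the regularity~\eqref{reg_weak_MP}; in particular $\ucko_{\tP}\in\LLko{2}{V}\cap\LLko{\infty}{H^3(\Omega)^2}$ and $\Ccko_{\tP}\in\LLko{\infty}{H^2(\Omega)^{2\times 2}}$. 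This is precisely the regularity~\eqref{us_Cs} required to run the Fokker--Planck theory. I would therefore set $(\us,\Cs):=(\ucko_{\tP},\Ccko_{\tP})$ and apply Theorem~\ref{theo:weak_FP}: since~\eqref{us_Cs} holds and $\psih_0$ satisfies~\eqref{FP_ic}, this yields a weak solution $\psih_{\tP}$ of (FP) with all the properties listed in~\eqref{psis_solution} (non-negativity, unit mass, finite relative entropy and Fisher information, and the bounds on $|\Rko|^r\psih_{\tP}$), and satisfying the weak formulation~\eqref{FP_weak} with $\us=\ucko_{\tP}$ and coefficient $\Gamma(\trC_{\tP})$, which is exactly the equation appearing in the statement.

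The decisive, and most delicate, step is the identification $\Ccko_{\tP}=\Ccko(M\psih_{\tP})$. By the macroscopic closure (the rigorous version of the computation following Theorem~\ref{theo:weak_FP}, along the lines of Lemmas~4.2, 4.5 and 4.6 of \cite{BaSu6}), the tensor $\Ccko(M\psih_{\tP})\in\LLko{\infty}{L^2(\Omega)^{2\times2}}\cap\LLko{2}{H^1(\Omega)^{2\times2}}$ satisfies the weak equation~\eqref{weak_closure} with velocity $\ucko_{\tP}$ and with the \emph{frozen} coefficient $\Gamma(\trCs)=\Gamma(\trC_{\tP})$. Because $\Cs=\Ccko_{\tP}$ is fixed, this coefficient does not depend on $\Ccko(M\psih_{\tP})$, so~\eqref{weak_closure} is a \emph{linear} advection--diffusion--reaction problem for $\Ccko(M\psih_{\tP})$. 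Using~\eqref{m2_Mcond} and~\eqref{Gamma_vareps} to rewrite the reaction term $2\Gamma(\trCs)[\Icko-\Ccko(M\psih_{\tP})]$ as $\tfrac1\lambda\gt(\trC_{\tP})\Icko-\tfrac{1}{\lambda\gamma_{\tM}}\go(\trC_{\tP})\Ccko(M\psih_{\tP})$, one sees that $\Ccko_{\tP}$ solves \emph{the same} linear problem: the macroscopic equation~\eqref{weak_MP_Ccko} carries the identical velocity $\ucko_{\tP}$ and, since its nonlinear coefficients are evaluated at the solution's own trace $\trC_{\tP}$, the identical reaction coefficients $\go(\trC_{\tP})$, $\gt(\trC_{\tP})$, and both tensors share the initial value $\Ccko_0=\Ccko(M\psih_0)$. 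I expect this to be the main obstacle, and it is resolved by a standard energy estimate: the difference $\Ccko_{\tP}-\Ccko(M\psih_{\tP})$ satisfies the homogeneous linear equation with zero initial data; testing against the difference, the transport term vanishes by $\dix\ucko_{\tP}=0$, while the stretching and reaction terms are controlled using $\nablax\ucko_{\tP}\in\LLko{\infty}{L^\infty(\Omega)}$ and $\go(\trC_{\tP})\in\LLko{\infty}{L^\infty(\Omega)}$ (both finite in $d=2$ via the embeddings $H^3\hookrightarrow W^{1,\infty}$ and $H^2\hookrightarrow L^\infty$), so that Gr\"onwall's lemma forces the difference to vanish in $\LLko{\infty}{L^2}\cap\LLko{2}{H^1}$.

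Finally, with the identification $\Ccko_{\tP}=\Ccko(M\psih_{\tP})$ in hand, the triple $(\ucko_{\tP},\Ccko_{\tP},\psih_{\tP})$ solves (KP) and yields the claimed statement~\eqref{psis_solution}. The Navier--Stokes part~\eqref{weak_MP_ucko} holds because $(\ucko_{\tP},\Ccko_{\tP})$ solves (MP), and since $\trC_{\tP}=\trC(M\psih_{\tP})$ and $\Ccko_{\tP}=\Ccko(M\psih_{\tP})$ the Kramers stress $n\gamma_3(\trC)\Ccko-\Icko$ evaluated on $\psih_{\tP}$ coincides with the macroscopic stress driving the momentum equation; the Fokker--Planck part holds by construction, now with the self-consistent coefficient $\Gamma(\trC_{\tP})=\Gamma(\trC(M\psih_{\tP}))$. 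Thus the whole scheme closes, and the success of the argument hinges precisely on the closure producing the linear problem already solved by the macroscopic $\Ccko_{\tP}$, so that the macroscopic uniqueness theory of Section~\ref{sec:uniq_macro} can be brought to bear.
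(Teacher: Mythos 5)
Your proposal is correct, and its skeleton is exactly the paper's: obtain the unique regular weak solution $(\ucko_{\tP},\Ccko_{\tP})$ of (MP) from Theorem~\ref{theo:uniq1} and Corollary~\ref{coro:uniq2}, feed $(\us,\Cs)=(\ucko_{\tP},\Ccko_{\tP})$ into Theorem~\ref{theo:weak_FP} to produce $\psih_{\tP}$, and identify $\Ccko_{\tP}=\Ccko(M\psih_{\tP})$ by comparing \eqref{weak_MP} with the macroscopic closure \eqref{weak_closure}. Where you genuinely depart from the paper is in how that identification is justified, and your version is the more precise one. The paper simply ``recalls uniqueness of the regular weak solution to (MP)'' from Section~\ref{sec:uniq_macro}; taken literally this is a statement about the nonlinear coupled system, and it does not apply verbatim, because $\Ccko(M\psih_{\tP})$ does not a priori solve \eqref{weak_MP_Ccko}: in \eqref{weak_closure} the coefficient $\Gamma(\trCs)=\Gamma(\trC_{\tP})$ is frozen at the trace of the \emph{macroscopic} tensor rather than evaluated at $\trC(M\psih_{\tP})$. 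You spot precisely this point, observe that the frozen coefficient makes \eqref{weak_closure} a \emph{linear} advection--diffusion--reaction problem which $\Ccko_{\tP}$ also solves (its own coefficients being evaluated at the same $\trC_{\tP}$, after rewriting via \eqref{m2_Mcond} and \eqref{Gamma_vareps}), and conclude by an energy/Gr\"onwall estimate on the difference; this is legitimate since $\nablax\ucko_{\tP}$ and $\go(\trC_{\tP})$ are bounded thanks to \eqref{reg_weak_MP} and the embeddings $H^3(\Omega)\hookrightarrow W^{1,\infty}(\Omega)$, $H^2(\Omega)\hookrightarrow L^{\infty}(\Omega)$ in $d=2$. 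What your route buys is a self-contained argument that makes explicit that only frozen-coefficient (linear) uniqueness is needed; what the paper's route buys is brevity, and in fairness your energy estimate is exactly the technique underlying the uniqueness proofs cited in Section~\ref{sec:uniq_macro}, applied here in an easier, linear situation. One leap you share with the paper: Theorem~\ref{theo:weak_FP} only delivers $\intR{M(\Rko)\psish(t,x,\Rko)}\le 1$, whereas \eqref{psis_solution} asserts equality; restoring equality requires an extra step (e.g.\ testing \eqref{FP_weak} with $\Rko$-independent test functions to see that the zeroth moment solves a linear drift--diffusion problem with initial datum $1$, whose unique solution is identically $1$, as in \cite{BaSu6}), which neither proof spells out.
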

\begin{proof}
The existence of $(\ucko_{\tP},\Ccko_{\tP})$ satisfying \eqref{us_Cs} as a unique solution to \eqref{weak_MP} is a straightforward consequence of Theorem~\ref{theo:uniq1} and Corollary~\ref{coro:uniq2}.  Theorem~\ref{theo:weak_FP} with $(\us,\Cs)=(\ucko_{\tP},\Ccko_{\tP})$ yields the existence of $\psih_{\tP}$ satisfying \eqref{psis_solution}. Comparing \eqref{weak_MP} with \eqref{weak_closure} and recalling uniqueness of the regular weak solution to (MP) from Section~\ref{sec:uniq_macro}, we can conclude that $\Ccko_{\tP}=\Ccko(M\psih_{\tP}).$ 
\end{proof}

 \section*{Acknowledgements}
This work was partially supported by the Simons - Foundation grant 346300, the Polish Government MNiSW 2015-2019 matching fund, and the German research foundation (DFG) grant TRR 146 ``Multiscale simulation methods for soft matter systems''. The authors would like to thank Gabriella Puppo (Universit\` a degli Studi dell'Insubria) and Endre S\"uli (University of Oxford) for fruitful discussion on the topic.

\bibliographystyle{siam}
\bibliography{NSFP_references}
\end{document}